\newtheorem{thm}{Theorem}
\newtheorem{lemma}{Lemma}
\newtheorem{define}{Definition}
\newcommand{\OO}{\mathcal{O}}
\theoremstyle{remark}
\newtheorem{rem}{Remark}
\def\l{\left}
\def\r{\right}
\def\R#1{$(\ref{#1})$}
\newcommand{\bb}[1]{\begin{equation}\label{#1}}
\newcommand{\ee}{\end{equation}}
\newcommand{\bbb}{\begin{eqnarray}}
\newcommand{\eee}{\end{eqnarray}}
\newcommand{\bbbb}{\begin{eqnarray*}}
\newcommand{\eeee}{\end{eqnarray*}}
\newcommand{\nnn}{\nonumber}
\newcommand{\x}{\textbf{x}}
\newcommand{\uu}{\textbf{u}}
\newcommand{\vv}{\textbf{v}}
\newcommand{\f}{\textbf{f}}
\newcommand{\z}{\textbf{z}}
\newcommand{\RR}{\mathbb{R}}
\newcommand{\clearallnum}{
    \numberwithin{equation}{section} \setcounter{equation}{0}
    \numberwithin{thm}{section} \setcounter{thm}{0}
    \numberwithin{lemma}{section} \setcounter{lemma}{0}
    \numberwithin{cor}{section} \setcounter{cor}{0}
    \numberwithin{rem}{section} \setcounter{rem}{0}
    \numberwithin{define}{section} \setcounter{define}{0}}
\begin{document}
\baselineskip=4.8mm

\begin{flushleft}

{\Large\bf A variable nonlinear splitting algorithm for reaction diffusion systems with self- and cross-diffusion}


\vspace{6mm}

{\bf Matthew A. Beauregard\footnote{Principal and corresponding author. Email
address: beauregama@sfasu.edu.  This author was supported in part by internal research grant (No. 150030-26423-150) from Stephen F. Austin State University.}, Joshua L. Padgett\small{$^2$}}

\vspace{4mm}

{\em\small$^1$Department of Mathematics $\&$ Statistics, Stephen F. Austin State University, Nacogdoches, TX, 75962}

{\em\small$^2$Department of Mathematics $\&$ Statistics, Texas Tech University, Lubbock, TX, 79409-1042}

\vspace{18mm}

\parbox[t]{13.8cm}{{\bf Abstract.}\\ \small Self- and cross-diffusion are important nonlinear spatial derivative terms that are included into biological models of predator-prey interactions. Self-diffusion models overcrowding effects, while cross-diffusion incorporates the response of one species in light of the concentration of another.  In this paper, a novel nonlinear operator splitting method is presented that directly incorporates both self- and cross-diffusion into a computational efficient design. The numerical analysis guarantees the accuracy and demonstrates appropriate criteria for stability.  Numerical experiments display its efficiency and accuracy.}

\vspace{8mm}

\parbox[t]{13.8cm}{\em Keywords: \small reaction-diffusion equations, nonlinear operator splitting, self-diffusion, cross-diffusion}

\end{flushleft}

\section{Introduction} \clearallnum

\linespread{1.0}

This paper is motivated to develop efficient and accurate numerical approximations to a generalized Shigesada-Kawasaki-Teramoto (SKT) model, which is a two-species predator-prey model first developed in \cite{Shig1979} and has been subject to much research ever since. Here, we consider the following self- and cross-diffusion system of equations that models the interaction between the prey ($u$) and predator ($v$),
\begin{eqnarray}
\label{eq:1.1}
u_t - \Delta\left(d_1 u + s_1 u^2 + c_{12} v u\right) &=& f(u,v) \\
\label{eq:1.2}
v_t - \Delta\left(d_2 v + s_2 v^2 + c_{21} u v\right) &=& g(u,v) 
\end{eqnarray}
\noindent defined on $\mathbb{R}^{+}\times \Omega$. Here $\Omega = [0,L]\times [0,L]$, $\Delta$ is the two-dimensional Laplacian operator, and the reactive functions $f$ and $g$ are in $C^1(\mathbb{R}^2)$. We define $\x$ to be the spatial coordinate vector in two dimensions. The initial populations are given as
\[ u(0,\x)=u_{0}(\x),~~ v(0,\x)=v_{0}(\x)~~~~~ \x \in \Omega, \]
\noindent and are assumed to be nonnegative and in $C^1(\Omega)$. The diffusion coefficients are given by $d_i > 0$. The parameters $c_{12}$ and $c_{21}$ describe the cross-diffusion response, with the parameter $c_{12}$ being nonnegative. This parameter choice encourages a prey to move away from high concentrations of the predator. A similar description is given for $c_{21} > 0$. In the event that $c_{21}<0$ the species $v$ move toward higher concentrations of $u$, its primary food source.  However, in this case, finite time blow-up of the solution may occur \cite{Zhu2014}, hence, we assume the $c_{21}>0$.  The nonnegative self-diffusion coefficients $s_i$ model interspecies competition and provides a severe penalty to over-crowding of a particular species.

Since each of the coefficients are assumed nonnegative then it is known that weak solutions exist globally in time when considering homogenous Neumann boundary conditions and reactive functions of Lotka-Volterra type that satisfy $f(0,v)=g(u,0)=0$ \cite{Chen2018,Chen2004,Chen2006}. Likewise, classical solutions globally exist under various restrictions on the self and cross diffusion coefficients \cite{Le2006}, namely, $s_1s_2 \geq 0, ~ s_2 > c_{12}, ~ s_1 > c_{21}.$  However, if $d_1<d_2$ then $s_1$ maybe less than $c_{21}$ and solutions will still exist globally in time \cite{Kouachi2014}.  In light of these theoretical results, this paper only considers homogeneous Neumann boundary conditions, even though our numerical analysis extends to homogeneous Dirichlet or Neumann-Dirichlet boundary conditions in a straightforward manner.

The inclusion of self- and cross-diffusion terms allow for realistic responses to predator and prey movement and are often incorporated into mathematical models in population biology \cite{Berres2011,Gierer1972,Yong2012,Yong2016}.  Our current efforts consider only a two species model, yet the methods developed herein can by readily extended to more general systems.
The considered model is rich in dynamics and stems from the generalizations of the SKT model and the literature is abundant with investigations of variants to this model \cite{AA02, Chen2004, Chen2006,Chenli2013,Kirane1996, Kouachi2014, Kuto2009, Shim2006, Xu2008}. However, the development and analysis of accurate and efficient numerical approximations has not been considered.

This paper is organized as follows. In Section 2 we present the nonlinear variable time splitting method where $s_i=0$. The algorithm is based on a variant of a splitting method recently developed by the authors for similar equations with self-diffusion in \cite{BeauPadgett2017}.  Section 3 develops the nonlinear splitting algorithm for systems with both self- and cross-diffusion. In both of these sections the methods are overall second-order accurate and their equivalent factorizations are preferable as they allow for parallel implementations and use of high performance computing (HPC) methods.  Section 4 details further numerical analysis to provide criteria that guarantee stability. Section 5 provides examples to illustrate the efficiency and accuracy of the algorithm.  We conclude and highlight our results in Section 6.

In the following, we define a scheme as \textit{computationally efficient} if it is \textit{second-order accurate} in space and time or better and the \textit{number of operations per time step is directly proportional to the number of unknowns}. In addition, all lowercase bold letters indicate vectors,
uppercase letters are used for matrices. Lastly, the $\ell^2$-norm is used throughout unless otherwise specified.

\section{Splitting algorithm with cross-diffusion only}

Given $N\gg 0,$ we may inscribe over $\Omega$ the mesh $\mathcal{D}_\delta = \{(x_i,y_j)~|~i,j=0,1,\dots,N+1\},$ where $\delta=L/(N+1)$ and $x_i=i\delta$ and $y_i=j\delta$ for $i,j=0,1,\dots,N+1.$ Further,  define $u_{i,j}(t)$ and $v_{i,j}(t)$ as the approximations to the exact solution $u(x_i, y_j, t)$ and $v(x_i, y_j, t)$, respectively.  Let
\begin{eqnarray*}
\textbf{u}&=&(u_{0,0}(t), u_{1,0}(t), \ldots, u_{N+1,0}(t), \ldots, u_{N+1,N+1}(t))^{\top},\\
\textbf{v}&=&(v_{0,0}(t), v_{1,0}(t), \ldots, v_{N+1,0}(t), \ldots, v_{N+1,N+1}(t))^{\top}.
\end{eqnarray*}
A similar notation is used for the reactive functions $f$ and $g$. Second-order central difference approximations are used to approximate the two-dimensional Laplacian operator in Eqs.~(\ref{eq:1.1})-(\ref{eq:1.2}) which yields the semidiscretized equations,
\begin{eqnarray*}
\frac{d \textbf{u}}{dt} &=& \left(d_1 (P + R) + c_{12}( P D(\textbf{v}) + R D(\textbf{v})) \right)\textbf{u} + \textbf{f} \\
\frac{d \textbf{v}}{dt} &=& \left(d_2 (P + R) + c_{21}( P D(\textbf{u}) + R D(\textbf{u})) \right)\textbf{v} + \textbf{g}
\end{eqnarray*}
where $P$, $R$, and $D(\textbf{q})=\mbox{diag}(\textbf{q})$ are $M^2\times M^2$ matrices, where $M=N+2$. The matrices are defined as $P\mathrel{\mathop:}=I_{M}\otimes T$ and $R\mathrel{\mathop:}=T\otimes I_{M},$ where $I_{M}$ is the $M\times M$ identity matrix and $T$ is a $M\times M$ tridiagonal matrix with main diagonal $-2/\delta^2$ and upper and lower diagonals of $1/\delta^2$ with the exception of $T_{12}=T_{M,M-1} = 2$ as a result of the Neumann boundary conditions. The semidiscretized equations are specified at every mesh point, with the Neumann boundary conditions being incorporated via central difference approximations.

A variable time-step second-order Crank-Nicolson method \cite{BeauPadgett2017} is used to advance the solution in time, that is,
{\small
\begin{eqnarray}
& &\left(I - \frac{\tau_k}{2} \left((P + R)(d_1 + c_{12} D_{k+1}^{(\textbf{v})} ) \right)\right)\textbf{u}_{k+1}\nnn\\
&&~~~~~~~ = \left(I + \frac{\tau_k}{2} \left((P + R)(d_1 + c_{12}D_k^{(\textbf{v})}) \right)\right)\textbf{u}_{k} + \frac{\tau_k}{2}\left(\textbf{f}_{k+1} + \textbf{f}_k\right)\label{crank1} \\
&&\left(I - \frac{\tau_k}{2} \left((P + R)(d_2 + c_{21}D_{k+1}^{(\textbf{u})}) \right)\right)\textbf{v}_{k+1}\nnn\\
&&~~~~~~~ = \left(I + \frac{\tau_k}{2} \left((P + R)(d_2 + c_{21} D_k^{(\textbf{u})}) \right)\right)\textbf{v}_{k} + \frac{\tau_k}{2}\left(\textbf{g}_{k+1} + \textbf{g}_k\right)\label{crank2}
\end{eqnarray}}
where $\textbf{u}_k$ and $\textbf{v}_k$ are approximations to $\textbf{u}$ and $\textbf{v}$ at time $t_k = \textstyle\sum_{i=0}^{k-1} \tau_k$ and $D_{k}^{(\textbf{q})} = D(\textbf{q}_k)$ are diagonal matrices whose elements are $\textbf{q} = \textbf{u},\,\textbf{v}.$ Similarly, the reactive functions $\textbf{f}_k$ and $\textbf{g}_k$ denotes the estimated values of $\textbf{f}$ and $\textbf{g}$ using the approximations of $\textbf{u}_k$ and $\textbf{v}_k$.  To advance the solution in time we propose to solve Eqs.~(\ref{crank1})-(\ref{crank2}) through a modified Douglass-Gunn splitting method, similar to that shown in \cite{BeauPadgett2017}, given by
{\small
\begin{subequations}
\begin{eqnarray}
\label{SplitStepFirst}
   \l(I - \frac{d_1\tau_k}{2} P\r)\tilde{\textbf{u}}^{(1)} &=& \left(I + \frac{\tau_k}{2} \left(d_1(P + 2R) + 2c_{12}(P+R)D_k^{(\textbf{v})}\right)\right) \textbf{u}_k + \tau_k \textbf{f}_k,~~~~~~~~ \\
   \l(I - \frac{d_2\tau_k}{2} P\r)\tilde{\textbf{v}}^{(1)} &=& \left(I + \frac{\tau_k}{2} \left(d_2(P + 2R) + 2c_{21}(P+R)D_k^{(\textbf{u})}\right)\right) \textbf{v}_k + \tau_k \textbf{g}_k,~~~~ \\
 \l(I - \frac{d_1\tau_k}{2} R\r)\tilde{\textbf{u}}^{(2)} &=& \tilde{\textbf{u}}^{(1)} - \frac{d_1\tau_k}{2} R \textbf{u}_k,\\
 \l(I - \frac{d_2\tau_k}{2} R\r)\tilde{\textbf{v}}^{(2)} &=& \tilde{\textbf{v}}^{(1)} - \frac{d_2\tau_k}{2} R \textbf{v}_k,\\
 \l(I - \frac{c_{12}\tau_k}{2} PD_{k+1}^{(\textbf{v})}\r)\tilde{\textbf{u}}^{(3)} &=& \tilde{\textbf{u}}^{(2)} - \frac{c_{12}\tau_k}{2} P D_k^{(\textbf{v})} \textbf{u}_k,\label{splitstepa}\\
 \l(I - \frac{c_{21}\tau_k}{2} PD_{k+1}^{(\textbf{u})}\r)\tilde{\textbf{v}}^{(3)} &=& \tilde{\textbf{v}}^{(2)} + \frac{c_{21}\tau_k}{2} P D_k^{(\textbf{u})} \textbf{v}_k,\\
 \l(I - \frac{c_{12}\tau_k}{2} RD_{k+1}^{(\textbf{v})}\r)\textbf{u}_{k+1} &=& \tilde{\textbf{u}}^{(3)} - \frac{c_{12}\tau_k}{2} R D_k^{(\textbf{v})} \textbf{u}_k + \frac{\tau_k}{2}\left( \textbf{f}_{k+1} - \textbf{f}_k\right),\\
\label{SplitStepFinal}
 \l(I - \frac{c_{21}\tau_k}{2} RD_{k+1}^{(\textbf{u})}\r)\textbf{v}_{k+1} &=& \tilde{\textbf{v}}^{(3)} - \frac{c_{21}\tau_k}{2} R D_k^{(\textbf{u})} \textbf{v}_k + \frac{\tau_k}{2}\left( \textbf{g}_{k+1} - \textbf{g}_k\right).
\end{eqnarray}
\end{subequations}}

Note that \R{splitstepa}-\R{SplitStepFinal} involve implicit terms. In order to maintain the desired computational efficiency, the implicit terms are approximated by taking an Euler step, that is,
\begin{eqnarray*}
\textbf{u}_{k+1} &=& \textbf{u}_k + \tau_k\left( (P+R)(d_1 + c_{12}D_k^{(\textbf{v})})\right) \textbf{u}_k + \tau_k \textbf{f}_k + \OO\l(\tau_k^2\r), \\
\textbf{v}_{k+1} &=& \textbf{v}_k + \tau_k\left( (P+R)(d_2 + c_{21}D_k^{(\textbf{u})})\right) \textbf{v}_k + \tau_k \textbf{g}_k + \OO\l(\tau_k^2\r).
\end{eqnarray*}
By employing the Euler step, we have developed a second-order accurate scheme which remains computationally efficient using standard ADI techinques \cite{Beauregard2013}. The Euler approximation is established in the first two steps (2.3a) and (2.3b).  Hence, the overall computational cost is still $\OO(N^2)$. In effect, the splitting method resolves the nonlinearity directly and advances the solution by iterating through one-dimensional subproblems. Careful implementation only requires the use of six $N\times 1$ vectors.  The splitting method's solution is equivalent to the solution of Eqs.~(\ref{crank1})-(\ref{crank2}). We demonstrate this equivalence in the following. First, we show that Eqs.~(\ref{crank1})-(\ref{crank2}) are equivalent to a useful factorization.
\begin{thm}
The induced error between the factorizations
\begin{eqnarray}
&& \left(I - \frac{d_1\tau_k}{2}P\right)\left(I - \frac{d_1\tau_k}{2}R\right)\left(I - \frac{c_{12}\tau_k}{2}PD_{k+1}^{(\textbf{v})}\right)\left(I - \frac{c_{12}\tau_k}{2}RD_{k+1}^{(\textbf{v})}\right)\textbf{u}_{k+1} \nonumber \\
\label{ufactoredNOADI}
&& ~~~~=\left(I + \frac{d_1\tau_k}{2}P\right)\left(I + \frac{d_1\tau_k}{2}R\right)\left(I + \frac{c_{12}\tau_k}{2}PD_{k}^{(\textbf{v})}\right)\left(I + \frac{c_{12}\tau_k}{2}RD_{k}^{(\textbf{v})}\right)\textbf{u}_{k}\nonumber\\&& ~~~~~~~ + \frac{\tau_k}{2}(\textbf{f}_{k+1} + \textbf{f}_k),\quad\quad\\
&& \left(I - \frac{d_2\tau_k}{2}P\right)\left(I - \frac{d_2\tau_k}{2}R\right)\left(I - \frac{c_{21}\tau_k}{2}PD_{k+1}^{(\textbf{u})}\right)\left(I - \frac{c_{21}\tau_k}{2}RD_{k+1}^{(\textbf{u})}\right)\textbf{v}_{k+1} \nonumber \\
&& ~~~~=\left(I + \frac{d_2\tau_k}{2}P\right)\left(I + \frac{d_2\tau_k}{2}R\right)\left(I + \frac{c_{21}\tau_k}{2}PD_{k}^{(\textbf{u})}\right)\left(I + \frac{c_{21}\tau_k}{2}RD_{k}^{(\textbf{u})}\right)\textbf{v}_{k} \nonumber \\ &&~~~~~~~ + \frac{\tau_k}{2}(\textbf{g}_{k+1} + \textbf{g}_k),\label{ufactoredNNOADI}
\end{eqnarray}
and Eqs.~(\ref{crank1})-(\ref{crank2}), respectively, is $\OO(\tau_k^3)$.
\end{thm}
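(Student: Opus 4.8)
The plan is to expand each operator in the factored system \R{ufactoredNOADI} as a polynomial in $\tau_k$ (treating the fixed spatial operators as $\OO(1)$ and expanding in powers of the time step) and then compare the result, term by term, with the Crank-Nicolson system \R{crank1}. Abbreviate $A=\tfrac{d_1\tau_k}{2}P$, $B=\tfrac{d_1\tau_k}{2}R$, and, for $m\in\{k,k+1\}$, $C_m=\tfrac{c_{12}\tau_k}{2}PD_m^{(\vv)}$ and $E_m=\tfrac{c_{12}\tau_k}{2}RD_m^{(\vv)}$; each of these is $\OO(\tau_k)$. Expanding,
\[
(I-A)(I-B)(I-C_{k+1})(I-E_{k+1}) = I - (A+B+C_{k+1}+E_{k+1}) + Q_{k+1} + \OO(\tau_k^3),
\]
where $Q_{k+1}=AB+AC_{k+1}+BC_{k+1}+AE_{k+1}+BE_{k+1}+C_{k+1}E_{k+1}$ is the sum of the six pairwise products, each written in the order the factors appear (the order matters, since the diagonal factors need not commute with $P$ and $R$). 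The right-hand operator of \R{ufactoredNOADI} expands the same way with every index $k+1$ replaced by $k$ and every sign reversed, giving $I+(A+B+C_k+E_k)+Q_k+\OO(\tau_k^3)$.

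First I would note that the identity-plus-linear part reproduces Crank-Nicolson exactly: since $A+B+C_m+E_m=\tfrac{d_1\tau_k}{2}(P+R)+\tfrac{c_{12}\tau_k}{2}(P+R)D_m^{(\vv)}=\tfrac{\tau_k}{2}(P+R)\bigl(d_1+c_{12}D_m^{(\vv)}\bigr)$, the factored left operator equals $I-\tfrac{\tau_k}{2}(P+R)(d_1+c_{12}D_{k+1}^{(\vv)})+Q_{k+1}+\OO(\tau_k^3)$ and the factored right operator equals $I+\tfrac{\tau_k}{2}(P+R)(d_1+c_{12}D_k^{(\vv)})+Q_k+\OO(\tau_k^3)$. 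Subtracting \R{crank1} from \R{ufactoredNOADI}, the forcing terms $\tfrac{\tau_k}{2}(\textbf{f}_{k+1}+\textbf{f}_k)$ cancel, and the discrepancy between the two schemes, evaluated at the Crank-Nicolson iterate, reduces to
\[
Q_{k+1}\uu_{k+1} - Q_k\uu_k + \bigl(\text{cubic and quartic products in } A,B,C_m,E_m\bigr),
\]
where the cubic and quartic contributions are immediately $\OO(\tau_k^3)$.

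It remains to show $Q_{k+1}\uu_{k+1}-Q_k\uu_k=\OO(\tau_k^3)$, the one place temporal regularity enters. Since $v$ is $C^1$ in time, $D_{k+1}^{(\vv)}-D_k^{(\vv)}=\OO(\tau_k)$, hence $C_{k+1}-C_k=\OO(\tau_k^2)$ and $E_{k+1}-E_k=\OO(\tau_k^2)$; as $A,B,C_m,E_m=\OO(\tau_k)$, subtracting $Q_{k+1}$ and $Q_k$ term by term gives $Q_{k+1}-Q_k=\OO(\tau_k^3)$. Writing $Q_{k+1}\uu_{k+1}-Q_k\uu_k = Q_{k+1}(\uu_{k+1}-\uu_k)+(Q_{k+1}-Q_k)\uu_k$ and using $Q_{k+1}=\OO(\tau_k^2)$ together with $\uu_{k+1}-\uu_k=\OO(\tau_k)$ (immediate from \R{crank1}) gives $\OO(\tau_k^3)$, so the entire discrepancy is $\OO(\tau_k^3)$. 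Multiplying through by $\bigl(I-(A+B+C_{k+1}+E_{k+1})\bigr)^{-1}=I+\OO(\tau_k)$, which is bounded for $\tau_k$ sufficiently small, shows that the factored scheme and \R{crank1} produce values of $\uu_{k+1}$ that differ by $\OO(\tau_k^3)$; the identical argument with $d_1\to d_2$, $c_{12}\to c_{21}$, $\uu\leftrightarrow\vv$ and $\textbf{f}\to\textbf{g}$ handles \R{ufactoredNNOADI} against \R{crank2}. I expect the only genuine obstacle to be the bookkeeping of the noncommuting quadratic terms — arranging them so that $Q_{k+1}$ is literally $Q_k$ with each $D_k^{(\vv)}$ replaced by $D_{k+1}^{(\vv)}$ in the same slot — which is precisely what makes the otherwise nonvanishing $\OO(\tau_k^2)$ terms collapse to $\OO(\tau_k^3)$ once the regularity of $v$ is invoked.
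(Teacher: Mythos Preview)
Your proposal is correct and follows essentially the same route as the paper: expand the factored operators in powers of $\tau_k$, observe that the linear-in-$\tau_k$ part reproduces the Crank--Nicolson operator exactly, and then use $D_{k+1}^{(\vv)}=D_k^{(\vv)}+\OO(\tau_k)$ to show the quadratic-in-$\tau_k$ contributions from the two sides cancel modulo $\OO(\tau_k^3)$. Your bookkeeping is in fact more careful than the paper's, since you make explicit the splitting $Q_{k+1}\uu_{k+1}-Q_k\uu_k = Q_{k+1}(\uu_{k+1}-\uu_k)+(Q_{k+1}-Q_k)\uu_k$ and the use of $\uu_{k+1}-\uu_k=\OO(\tau_k)$, a step the paper leaves implicit.
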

\begin{proof}
We show the equivalency between Eq.~(\ref{ufactoredNOADI}) and Eq.~(\ref{crank1}).  A virtually identical proof can be used to establish the result for $\textbf{v}$. For ease of exposition, we disregard the reaction function approximations $\textbf{f}_k$ and $\textbf{f}_{k+1}.$ Expanding the matrix products on the left- and right-hand sides of Eq.~(\ref{ufactoredNOADI}) yields,
\begin{eqnarray*}
&& I - \frac{\tau}{2}\left( d_1(P + R) + c_{12}( P + R )D_{k+1}^{(\textbf{v})}\right) + \frac{\tau^2}{4}\left( d_1^2 PR + \left(c_{12}^2 P D_{k+1}^{(\textbf{v})}R \right.\right.\\
&&~~~~~~~~~~ \left.\left. + d_1c_{12} P^2 + d_1c_{12} PR + d_1c_{12} RP + d_1c_{12} R^2\right)D_{k+1}^{(\textbf{v})}\right) + \OO(\tau_k^3) \\
&& = I + \frac{\tau}{2}\left( d_1(P + R) + c_{12}( P + R)D_{k}^{(\textbf{v})}\right) + \frac{\tau^2}{4}\left( d_1^2 PR + \left(c_{12}^2 P D_{k}^{(\textbf{v})}R \right.\right.\\
&&~~~~~~~~~~ \left.\left. + d_1c_{12} P^2 + d_1c_{12} PR + d_1c_{12} RP + d_1c_{12} R^2\right)D_{k}^{(\textbf{v})}\right) + \OO(\tau_k^3).
\end{eqnarray*}
The desired result is obtained by expanding the $\OO(\tau_k^2)$ terms about $\vv_k$ in the left-hand side: that is $D_{k+1}=D_k + \OO\left(\tau_k\right)$. The truncated terms contribute terms of order $\OO(\tau^3)$. Therefore the $\OO(\tau^2)$ terms on both sides cancel. Consequently the factorization is equivalent up to $\OO(\tau_k^3)$.
\end{proof}
The factorization in Eqs.~\R{ufactoredNOADI} and \R{ufactoredNNOADI} is desirable as it leads to our splitting method given in Eqs.~(2.3a)-(2.3h). This is encapsulated in the following theorem.
\begin{thm}
The induced error between the factorizations given by Eqs.~\R{ufactoredNOADI} and \R{ufactoredNNOADI}
%
and the splitting design of Eqs.~(\ref{SplitStepFirst})-(\ref{SplitStepFinal}) is $\OO(\tau_k^3)$.
\end{thm}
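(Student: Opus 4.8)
\noindent\emph{Proof plan.} The plan is to eliminate the three intermediate vectors $\tilde{\textbf{u}}^{(1)},\tilde{\textbf{u}}^{(2)},\tilde{\textbf{u}}^{(3)}$ from the substeps (2.3a)--(2.3h) by successive left-multiplication, collapsing the eight one-dimensional solves into a single relation between $\textbf{u}_{k+1}$ and $\textbf{u}_k$, and then to match that relation against the factorization in \R{ufactoredNOADI} up to $\OO(\tau_k^3)$. As in the preceding theorem I treat only the $\textbf{u}$-equations, the $\textbf{v}$-case being entirely analogous. Write $A_1 = I - \tfrac{d_1\tau_k}{2}P$, $A_2 = I - \tfrac{d_1\tau_k}{2}R$, $A_3 = I - \tfrac{c_{12}\tau_k}{2}PD_{k+1}^{(\textbf{v})}$ and $A_4 = I - \tfrac{c_{12}\tau_k}{2}RD_{k+1}^{(\textbf{v})}$, so that $A_1A_2A_3A_4$ is exactly the operator acting on $\textbf{u}_{k+1}$ in \R{ufactoredNOADI}. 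Multiplying (2.3g) on the left by $A_3$ and substituting (2.3e), then multiplying by $A_2$ and substituting (2.3c), and finally multiplying by $A_1$ and substituting (2.3a), yields
\begin{align*}
A_1A_2A_3A_4\,\textbf{u}_{k+1} &= \Big(I + \tfrac{\tau_k}{2}\big(d_1(P+2R) + 2c_{12}(P+R)D_k^{(\textbf{v})}\big)\Big)\textbf{u}_k + \tau_k\textbf{f}_k \\
&\quad - A_1\tfrac{d_1\tau_k}{2}R\,\textbf{u}_k - A_1A_2\tfrac{c_{12}\tau_k}{2}PD_k^{(\textbf{v})}\textbf{u}_k - A_1A_2A_3\tfrac{c_{12}\tau_k}{2}RD_k^{(\textbf{v})}\textbf{u}_k \\
&\quad + A_1A_2A_3\tfrac{\tau_k}{2}(\textbf{f}_{k+1}-\textbf{f}_k).
\end{align*}
Since the left-hand operator already coincides with that of \R{ufactoredNOADI}, the entire task reduces to showing the right-hand side equals $\big(I+\tfrac{d_1\tau_k}{2}P\big)\big(I+\tfrac{d_1\tau_k}{2}R\big)\big(I+\tfrac{c_{12}\tau_k}{2}PD_k^{(\textbf{v})}\big)\big(I+\tfrac{c_{12}\tau_k}{2}RD_k^{(\textbf{v})}\big)\textbf{u}_k + \tfrac{\tau_k}{2}(\textbf{f}_{k+1}+\textbf{f}_k)$ modulo $\OO(\tau_k^3)$.

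I would handle the reaction contribution first: the Euler approximation of the implicit substeps recorded immediately after (2.3h) gives $\textbf{u}_{k+1}-\textbf{u}_k = \OO(\tau_k)$, so the $C^1$-regularity of $f$ forces $\textbf{f}_{k+1}-\textbf{f}_k = \OO(\tau_k)$; since $A_1A_2A_3 = I + \OO(\tau_k)$, the term $A_1A_2A_3\tfrac{\tau_k}{2}(\textbf{f}_{k+1}-\textbf{f}_k)$ equals $\tfrac{\tau_k}{2}(\textbf{f}_{k+1}-\textbf{f}_k) + \OO(\tau_k^3)$, and adding $\tau_k\textbf{f}_k$ produces exactly $\tfrac{\tau_k}{2}(\textbf{f}_{k+1}+\textbf{f}_k)$ up to $\OO(\tau_k^3)$. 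For the remaining $\textbf{u}_k$-terms I would expand $A_1\tfrac{d_1\tau_k}{2}R$, $A_1A_2\tfrac{c_{12}\tau_k}{2}PD_k^{(\textbf{v})}$ and $A_1A_2A_3\tfrac{c_{12}\tau_k}{2}RD_k^{(\textbf{v})}$ to second order in $\tau_k$, using $D_{k+1}^{(\textbf{v})} = D_k^{(\textbf{v})} + \OO(\tau_k)$ to replace every $D_{k+1}^{(\textbf{v})}$ occurring at order $\tau_k^2$ by $D_k^{(\textbf{v})}$. Collecting by powers of $\tau_k$: the $\OO(1)$ contribution is $I$; the $\OO(\tau_k)$ contributions telescope to $\tfrac{\tau_k}{2}\big(d_1(P+R)+c_{12}(P+R)D_k^{(\textbf{v})}\big)$; and the $\OO(\tau_k^2)$ contributions, after using $(P+R)PD_k^{(\textbf{v})}+(P+R)RD_k^{(\textbf{v})}=(P+R)^2D_k^{(\textbf{v})}$, combine into $\tfrac{\tau_k^2}{4}\big(d_1^2PR + d_1c_{12}(P+R)^2D_k^{(\textbf{v})} + c_{12}^2PD_k^{(\textbf{v})}RD_k^{(\textbf{v})}\big)$. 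These are precisely the zeroth-, first-, and second-order parts of the expansion of $\big(I+\tfrac{d_1\tau_k}{2}P\big)\big(I+\tfrac{d_1\tau_k}{2}R\big)\big(I+\tfrac{c_{12}\tau_k}{2}PD_k^{(\textbf{v})}\big)\big(I+\tfrac{c_{12}\tau_k}{2}RD_k^{(\textbf{v})}\big)$ --- the same expression already met on the right-hand side of the preceding theorem --- so the two right-hand sides agree up to $\OO(\tau_k^3)$, which establishes the claim for $\textbf{u}$; the $\textbf{v}$-equations follow by an identical computation.

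I expect the order-$\tau_k^2$ bookkeeping in the last step to be the main obstacle: one must expand the three correction terms correctly, respect the non-commutativity of $P$, $R$ and the diagonal factors $D_k^{(\textbf{v})}$, and verify that the mixed term $\tfrac{d_1c_{12}\tau_k^2}{4}(P+R)^2D_k^{(\textbf{v})}$ emerges with the correct coefficient after the telescoping; this is the same flavour of computation as in the preceding theorem but with more terms. A secondary and more routine point is the justification that the implicitly-defined diagonal matrices $D_{k+1}^{(\textbf{v})}$ (and $D_{k+1}^{(\textbf{u})}$) may be replaced by their time-$t_k$ counterparts to within $\OO(\tau_k)$, which again rests on the Euler step used to render the last six substeps explicit; once that is in hand, all remaining manipulations parallel those of the preceding theorem.
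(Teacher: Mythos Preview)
Your proposal is correct and follows essentially the same route as the paper's proof: both multiply the final $\textbf{u}$-substep on the left by $A_1A_2A_3$, back-substitute the intermediate vectors $\tilde{\textbf{u}}^{(1)},\tilde{\textbf{u}}^{(2)},\tilde{\textbf{u}}^{(3)}$, and then use $D_{k+1}^{(\textbf{v})}=D_k^{(\textbf{v})}+\OO(\tau_k)$ and $\textbf{f}_{k+1}=\textbf{f}_k+\OO(\tau_k)$ to match the right-hand side with the factorized form of \R{ufactoredNOADI} up to $\OO(\tau_k^3)$. Your bookkeeping of the $\OO(\tau_k^2)$ terms, namely $\tfrac{\tau_k^2}{4}\big(d_1^2PR + d_1c_{12}(P+R)^2D_k^{(\textbf{v})} + c_{12}^2PD_k^{(\textbf{v})}RD_k^{(\textbf{v})}\big)$, is exactly what the paper records (with the constants suppressed there), so no new obstacle arises.
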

\begin{proof}
This is a straightforward calculation accomplished by multiplying (2.3g) by the matrix product:
$$\left(I - \frac{d_1\tau_k}{2}P\right)\left(I-\frac{d_1\tau_k}{2}R\right)\left(I-\frac{c_{12}\tau_k}{2}PD_{k+1}^{(\textbf{v})}\right).$$
Expanding and substituting into the result the definitions for $\tilde{\textbf{u}}^{(1)}, \tilde{\textbf{u}}^{(2)},$ and $\tilde{\textbf{u}}^{(3)}$ results in
\begin{eqnarray*}
&&\left(I - \frac{d_1\tau_k}{2}P\right)\left(I-\frac{d_1\tau_k}{2}R\right)\left(I-\frac{c_{12}\tau_k}{2}PD_{k+1}^{(\textbf{v})}\right)\left(I-\frac{c_{12} \tau_k}{2}RD_{k+1}^{(\textbf{v})}\right)\textbf{u}_{k+1} \\
&&~~~~~~~~~~= \left(I + \frac{\tau_k}{2}\left( d_1(P + R) + c_{12}(P+R)D_k^{(\textbf{v})}\right)\right)\textbf{u}_k + \frac{\tau_k}{2}(\textbf{f}_{k+1}+\textbf{f}_k) \\
&&~~~~~~~~~~~~~+ \frac{\tau_k^2}{4}\left( (RP + R^2 + P D_k^{(\textbf{v})} R + P^2 + PR)D_k^{(\textbf{v})} + PR  \right)\textbf{u}_k + \OO(\tau_k^3),
\end{eqnarray*}
where we have used the expansions $\vv_{k+1}=\vv_k + \OO\left(\tau_k\right)$ and $\textbf{f}_{k+1} = \textbf{f}_k + \OO(\tau_k)$. The $\textbf{u}_k$ terms on the right-hand side can be factored to yield the desired result. A similar proof can be established for the factorization used to advance $\textbf{v}$.
\end{proof}
The combination of these two theorems establishes that the splitting method is second-order accurate in time. Hence, the overall accuracy of the splitting method combined with the spatial discretization is second-order accurate.

\section{Splitting algorithm with self and cross-diffusion}

Here, we adopt the same notation as in the previous section and consider the semidiscretization of Eqs.~(\ref{eq:1.1})-(\ref{eq:1.2}),
\begin{eqnarray*}
\frac{d \textbf{u}}{dt} &=& \left[ (P + R)(d_1 + s_1 D(\uu) + c_{12} D(\textbf{v})) \right]\textbf{u} + \textbf{f}, \\
\frac{d \textbf{v}}{dt} &=& \left[ (P + R)(d_2 + s_2 D(\vv) + c_{21} D(\textbf{u})) \right]\textbf{v} + \textbf{g}.
\end{eqnarray*}
As before, we advance solution in time using a variable time-step second-order Crank-Nicolson method,
\begin{eqnarray}
& &\left(I - \frac{\tau_k}{2} \left((P + R)(d_1+s_1 D_{k+1}^{(\uu)} + c_{12} D_{k+1}^{(\textbf{v})}) \right)\right)\textbf{u}_{k+1}\nnn\\
&&~~~~~~~~ = \left(I + \frac{\tau_k}{2} \left((P + R)(d_1+s_1 D_{k}^{(\uu)} + c_{12} D_k^{(\textbf{v})}) \right)\right)\textbf{u}_{k} + \frac{\tau_k}{2}\left(\textbf{f}_{k+1} + \textbf{f}_k\right),\quad~~\label{crankSELFCROSS1} \\
&&\left(I - \frac{\tau_k}{2} \left((P + R)(d_2+s_2 D_{k+1}^{(\vv)} + c_{21}D_{k+1}^{(\textbf{u})} ) \right)\right)\textbf{v}_{k+1}\nnn\\
&&~~~~~~~~ = \left(I + \frac{\tau_k}{2} \left((P + R)(d_2+s_2 D_{k}^{(\vv)}+ c_{21} D_k^{(\textbf{u})}) \right)\right)\textbf{v}_{k} + \frac{\tau_k}{2}\left(\textbf{g}_{k+1} + \textbf{g}_k\right).\quad~~\label{crankSELFCROSS2}
\end{eqnarray}
To advance the solution, just as before, a modified Douglass-Gunn splitting method is utilized to solve Eqs.~(\ref{crankSELFCROSS1})-(\ref{crankSELFCROSS2}). The proposed method is given by
{\small
\begin{subequations}
\begin{eqnarray}
\label{SplitStepFirstFull}
   \l(I - \frac{d_1\tau_k}{2} P\r)\tilde{\uu}^{(1)} - \tau_k \textbf{f}_k &=& \left(I + \frac{\tau_k}{2} \left(d_1(P + 2R) \right.\right. \nonumber \\ && ~~~ \left.\left.+ 2(P+R)( s_1 D_k^{(\uu)} + c_{12}D_k^{(\vv)})\right)\right) \uu_k, \quad\\
   \l(I - \frac{d_2\tau_k}{2} P\r)\tilde{\vv}^{(1)} - \tau_k \textbf{g}_k &=& \left(I + \frac{\tau_k}{2} \left(d_2(P + 2R) \right.\right. \nonumber \\ && ~~~ \left.\left.+ 2(P+R)( s_2 D_k^{(\vv)} + c_{21}D_k^{(\uu)})\right)\right) \vv_k, \\
 \l(I - \frac{d_1\tau_k}{2} R\r)\tilde{\uu}^{(2)} &=& \tilde{\uu}^{(1)} - \frac{d_1\tau_k}{2} R \uu_k,\\
 \l(I - \frac{d_2\tau_k}{2} R\r)\tilde{\vv}^{(2)} &=& \tilde{\vv}^{(1)} - \frac{d_2\tau_k}{2} R \vv_k,\\
 \l(I - \frac{s_{1}\tau_k}{2} PD_{k+1}^{(\uu)}\r)\tilde{\uu}^{(3)} &=& \tilde{\uu}^{(2)} - \frac{s_{1}\tau_k}{2} P D_k^{(\uu)} \uu_k,\\
 \l(I - \frac{s_{2}\tau_k}{2} PD_{k+1}^{(\vv)}\r)\tilde{\vv}^{(3)} &=& \tilde{\vv}^{(2)} - \frac{s_{2}\tau_k}{2} P D_k^{(\vv)} \vv_k,\\
 \l(I - \frac{s_{1}\tau_k}{2} RD_{k+1}^{(\uu)}\r)\tilde{\uu}^{(4)} &=& \tilde{\uu}^{(3)} - \frac{s_{1}\tau_k}{2} R D_k^{(\uu)} \uu_k,\\
 \l(I - \frac{s_{2}\tau_k}{2} RD_{k+1}^{(\vv)}\r)\tilde{\vv}^{(4)} &=& \tilde{\vv}^{(3)} - \frac{s_{2}\tau_k}{2} R D_k^{(\vv)} \vv_k,\\
 \l(I - \frac{c_{12}\tau_k}{2} PD_{k+1}^{(\vv)}\r)\tilde{\uu}^{(5)} &=& \tilde{\uu}^{(4)} - \frac{c_{12}\tau_k}{2} P D_k^{(\vv)} \uu_k,\\
 \l(I - \frac{c_{21}\tau_k}{2} PD_{k+1}^{(\uu)}\r)\tilde{\vv}^{(5)} &=& \tilde{\vv}^{(4)} - \frac{c_{21}\tau_k}{2} P D_k^{(\uu)} \vv_k,\\
 \l(I - \frac{c_{12}\tau_k}{2} RD_{k+1}^{(\vv)}\r)\uu_{k+1} &=& \tilde{\uu}^{(5)} - \frac{c_{12}\tau_k}{2} R D_k^{(\vv)} \uu_k + \frac{\tau_k}{2}\left( \textbf{f}_{k+1} - \textbf{f}_k\right),\\
\label{SplitStepFinalFull}
 \l(I - \frac{c_{21}\tau_k}{2} RD_{k+1}^{(\uu)}\r)\vv_{k+1} &=& \tilde{\vv}^{(5)} - \frac{c_{21}\tau_k}{2} R D_k^{(\uu)} \vv_k + \frac{\tau_k}{2}\left( \textbf{g}_{k+1} - \textbf{g}_k\right).
\end{eqnarray}
\end{subequations}}
An Euler step is used to approximate the implicit terms and maintain the computational efficiency.
%
%
The splitting method's solution is equivalent to the solution of Eqs.~(\ref{crankSELFCROSS1})-(\ref{crankSELFCROSS2}).  The following theorems develop an equivalent factorization of the Eqs.~(\ref{crankSELFCROSS1})-(\ref{crankSELFCROSS2}) and show that this factorization is equivalent to the splitting in Eqs.~(\ref{SplitStepFirstFull})-(\ref{SplitStepFinalFull}).  The proofs are omitted since they follow a similar pattern to those of Theorems 2.1 and 2.2.

In the following, for brevity, we employ the notations 
$$M_1(\tau_k,\uu,\vv,d_1,s_1)\mathrel{\mathop:}=\left(I - \frac{d_1\tau_k}{2}P\right)\left(I - \frac{d_1\tau_k}{2}R\right)\left(I - \frac{s_{1}\tau_k}{2}PD_{k+1}^{(\uu)}\right)\left(I - \frac{s_{1}\tau_k}{2}RD_{k+1}^{(\uu)}\right)$$
and
$$M_2(\tau_k,\uu,\vv,d_1,s_1)\mathrel{\mathop:}= \left(I + \frac{d_1\tau_k}{2}P\right)\left(I + \frac{d_1\tau_k}{2}R\right)\left(I + \frac{s_{1}\tau_k}{2}PD_{k}^{(\uu)}\right)\left(I - \frac{s_{1}\tau_k}{2}RD_{k}^{(\uu)}\right).$$
\begin{thm}
The induced error between the factorizations{\small
\begin{eqnarray}
&& M_1(\tau_k,\emph{\uu},\emph{\vv},d_1,s_1)\left(I - \frac{c_{12}\tau_k}{2}PD_{k+1}^{(\emph{\vv})}\right)\left(I - \frac{c_{12}\tau_k}{2}RD_{k+1}^{(\emph{\vv})}\right)\emph{\uu}_{k+1}\nnn\\
&&~~~ = M_2(\tau_k,\emph{\uu},\emph{\vv},d_1,s_1)\left(I + \frac{c_{12}\tau_k}{2}PD_{k}^{(\emph{\vv})}\right)\left(I + \frac{c_{12}\tau_k}{2}RD_{k}^{(\emph{\vv})}\right)\emph{\uu}_{k} + \frac{\tau_k}{2}(\emph{\textbf{f}}_{k+1} + \emph{\textbf{f}}_k),~~~~~~~\label{sc1} \\
&& M_1(\tau_k,\emph{\vv},\emph{\uu},d_2,s_2)\left(I - \frac{c_{21}\tau_k}{2}PD_{k+1}^{(\emph{\uu})}\right)\left(I - \frac{c_{21}\tau_k}{2}RD_{k+1}^{(\emph{\uu})}\right)\emph{\vv}_{k+1} \nonumber \\
&&~~~ = M_2(\tau_k,\emph{\vv},\emph{\uu},d_2,s_2)\left(I + \frac{c_{21}\tau_k}{2}PD_{k}^{(\emph{\uu})}\right)\left(I + \frac{c_{21}\tau_k}{2}RD_{k}^{(\emph{\uu})}\right)\emph{\vv}_{k} + \frac{\tau_k}{2}(\emph{\textbf{g}}_{k+1} + \emph{\textbf{g}}_k)\label{sc2}
\end{eqnarray}}
and Eqs.~(\ref{crankSELFCROSS1})-(\ref{crankSELFCROSS2}), respectively, is $\OO(\tau_k^3)$.
\end{thm}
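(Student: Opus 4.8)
The plan is to mirror the proof of Theorem~2.1, the only structural change being that the matrix product on each side of \R{sc1} now carries six factors rather than four. As there, it suffices to treat the $\uu$-equation \R{sc1}: the $\vv$-equation \R{sc2} follows verbatim under the relabeling $\uu\leftrightarrow\vv$, $d_1\to d_2$, $s_1\to s_2$, $c_{12}\to c_{21}$, $\textbf{f}\to\textbf{g}$. Since the reaction contribution $\tfrac{\tau_k}{2}(\textbf{f}_{k+1}+\textbf{f}_k)$ sits unmultiplied on the right-hand sides of both \R{sc1} and \R{crankSELFCROSS1}, I would discard it at the outset and restore it at the end.

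First I would expand the left-hand side of \R{sc1}. Writing its six factors as $I-\tfrac{\tau_k}{2}A_i^{+}$ with $A_1^{+}=d_1P$, $A_2^{+}=d_1R$, $A_3^{+}=s_1PD_{k+1}^{(\uu)}$, $A_4^{+}=s_1RD_{k+1}^{(\uu)}$, $A_5^{+}=c_{12}PD_{k+1}^{(\vv)}$ and $A_6^{+}=c_{12}RD_{k+1}^{(\vv)}$, the product is $I-\tfrac{\tau_k}{2}\sum_{i}A_i^{+}+\tfrac{\tau_k^2}{4}\sum_{i<j}A_i^{+}A_j^{+}+\OO(\tau_k^3)$; the linear term collapses to $(P+R)\bigl(d_1+s_1D_{k+1}^{(\uu)}+c_{12}D_{k+1}^{(\vv)}\bigr)$, which is exactly the operator acting on $\uu_{k+1}$ in \R{crankSELFCROSS1}. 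Expanding the right-hand side of \R{sc1} the same way --- its six factors being $I+\tfrac{\tau_k}{2}A_i^{-}$, where $A_i^{-}$ is $A_i^{+}$ with every $D_{k+1}$ replaced by $D_k$ --- produces $I+\tfrac{\tau_k}{2}(P+R)\bigl(d_1+s_1D_{k}^{(\uu)}+c_{12}D_{k}^{(\vv)}\bigr)+\tfrac{\tau_k^2}{4}\sum_{i<j}A_i^{-}A_j^{-}+\OO(\tau_k^3)$ acting on $\uu_k$.

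The crux is then the same observation used in Theorem~2.1: since $f,g\in C^1$ the semidiscrete trajectory is $C^1$ in time, so $D_{k+1}^{(\uu)}=D_{k}^{(\uu)}+\OO(\tau_k)$, $D_{k+1}^{(\vv)}=D_{k}^{(\vv)}+\OO(\tau_k)$ and $\uu_{k+1}=\uu_k+\OO(\tau_k)$. Hence $A_i^{+}=A_i^{-}+\OO(\tau_k)$ for every $i$, so $\sum_{i<j}A_i^{+}A_j^{+}=\sum_{i<j}A_i^{-}A_j^{-}+\OO(\tau_k)$, and the two quadratic contributions obey $\tfrac{\tau_k^2}{4}\bigl(\sum_{i<j}A_i^{+}A_j^{+}\bigr)\uu_{k+1}=\tfrac{\tau_k^2}{4}\bigl(\sum_{i<j}A_i^{-}A_j^{-}\bigr)\uu_k+\OO(\tau_k^3)$. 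Thus the $\OO(\tau_k^2)$ terms on the two sides cancel modulo $\OO(\tau_k^3)$, and what remains --- once $\tfrac{\tau_k}{2}(\textbf{f}_{k+1}+\textbf{f}_k)$ is restored --- is precisely \R{crankSELFCROSS1} with an $\OO(\tau_k^3)$ remainder. The identical computation, with the substitutions above, yields the statement for \R{sc2}.

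I do not expect a genuine obstacle. The argument is structurally identical to Theorem~2.1; passing from four to six factors merely enlarges the list of quadratic terms from $\binom{4}{2}=6$ to $\binom{6}{2}=15$, and --- as in Theorem~2.1 --- those terms never need to be written out, since all one uses is that they agree on the two sides modulo $\OO(\tau_k)$, which is immediate from $D_{k+1}=D_k+\OO(\tau_k)$. The mild nuisance is the noncommutativity of $P$, $R$, $D^{(\uu)}$ and $D^{(\vv)}$ among the fifteen products, but because those products are discarded rather than simplified, their ordering is irrelevant to the conclusion.
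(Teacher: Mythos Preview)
Your proposal is correct and takes precisely the approach the paper intends: the paper omits the proof entirely, stating only that it ``follow[s] a similar pattern to those of Theorems 2.1 and 2.2,'' and your argument is exactly the natural extension of the Theorem~2.1 proof from four factors to six. Your added remark that the fifteen quadratic cross-terms never need to be written out --- because $A_i^{+}=A_i^{-}+\OO(\tau_k)$ makes them cancel in bulk --- is a clean way to organize the bookkeeping and is implicit in the paper's Theorem~2.1 as well.
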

\begin{thm}
The induced error between the factorizations given by Eqs~\R{sc1}-\R{sc2}
%
and the splitting design of Eqs.~(\ref{SplitStepFirstFull})-(\ref{SplitStepFinalFull}) is $\OO(\tau_k^3)$.
\end{thm}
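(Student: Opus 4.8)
The plan is to follow the template of the proof of Theorem 2.2, the only genuine change being that the update chain for $\uu$ now threads through the five auxiliary vectors $\tilde{\uu}^{(1)},\dots,\tilde{\uu}^{(5)}$ rather than three. I would start from the last $\uu$-step of the scheme \R{SplitStepFirstFull}--\R{SplitStepFinalFull},
$$\l(I - \tfrac{c_{12}\tau_k}{2} RD_{k+1}^{(\vv)}\r)\uu_{k+1} \;=\; \tilde{\uu}^{(5)} - \tfrac{c_{12}\tau_k}{2} R D_k^{(\vv)} \uu_k + \tfrac{\tau_k}{2}\l(\f_{k+1} - \f_k\r),$$
and left-multiply both sides by $M_1(\tau_k,\uu,\vv,d_1,s_1)\,\l(I - \tfrac{c_{12}\tau_k}{2}PD_{k+1}^{(\vv)}\r)$. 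By construction the left-hand side then coincides with the left-hand side of \R{sc1}, so everything reduces to checking that the resulting right-hand side agrees with that of \R{sc1} modulo $\OO(\tau_k^3)$.

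Next I would collapse the auxiliary vectors one after another, in reverse order of their definition: the defining relation for $\tilde{\uu}^{(5)}$ converts $\l(I - \tfrac{c_{12}\tau_k}{2}PD_{k+1}^{(\vv)}\r)\tilde{\uu}^{(5)}$ into $\tilde{\uu}^{(4)} - \tfrac{c_{12}\tau_k}{2}PD_k^{(\vv)}\uu_k$; the relation for $\tilde{\uu}^{(4)}$ then removes the factor $\l(I - \tfrac{s_1\tau_k}{2}RD_{k+1}^{(\uu)}\r)$, and likewise on down through $\tilde{\uu}^{(3)}$, $\tilde{\uu}^{(2)}$, and finally $\tilde{\uu}^{(1)}$, whose relation supplies the $\uu_k$-block together with the reaction term $\tau_k\f_k$. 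Each substitution replaces an implicit factor $\l(I-\tfrac{\tau_k}{2}B^{(k+1)}\r)$ acting on an auxiliary vector by a lagged additive term $-\tfrac{\tau_k}{2}B^{(k)}\uu_k$ (for $\tilde{\uu}^{(1)}$, by the full stage-1 right-hand side). Having done this, I would expand the resulting matrix products and discard everything of order $\OO(\tau_k^3)$, invoking, exactly as in Theorem 2.1, the expansions $D_{k+1}^{(\uu)}=D_k^{(\uu)}+\OO(\tau_k)$, $D_{k+1}^{(\vv)}=D_k^{(\vv)}+\OO(\tau_k)$, $\uu_{k+1}=\uu_k+\OO(\tau_k)$, $\vv_{k+1}=\vv_k+\OO(\tau_k)$, and $\f_{k+1}=\f_k+\OO(\tau_k)$, so that any term carrying a factor $\tau_k^2$ alongside a difference $D_{k+1}-D_k$ or $\f_{k+1}-\f_k$ is absorbed into the remainder.

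What survives is then read off by degree in $\tau_k$. The $\OO(1)$ term is $\uu_k$. The $\OO(\tau_k)$ terms combine the stage-1 block $\tfrac{\tau_k}{2}\l[d_1(P+2R)+2(P+R)(s_1D_k^{(\uu)}+c_{12}D_k^{(\vv)})\r]\uu_k+\tau_k\f_k$ with the lagged corrections $-\tfrac{d_1\tau_k}{2}R\uu_k-\tfrac{s_1\tau_k}{2}(P+R)D_k^{(\uu)}\uu_k-\tfrac{c_{12}\tau_k}{2}(P+R)D_k^{(\vv)}\uu_k$ and the half-difference $\tfrac{\tau_k}{2}(\f_{k+1}-\f_k)$; these collapse to $\tfrac{\tau_k}{2}\l[d_1(P+R)+(P+R)(s_1D_k^{(\uu)}+c_{12}D_k^{(\vv)})\r]\uu_k+\tfrac{\tau_k}{2}(\f_{k+1}+\f_k)$, which is precisely the linear-in-$\tau_k$ part of the right-hand side of \R{crankSELFCROSS1}. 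Writing $B_1,\dots,B_6$ for the operators $d_1P,\ d_1R,\ s_1PD_k^{(\uu)},\ s_1RD_k^{(\uu)},\ c_{12}PD_k^{(\vv)},\ c_{12}RD_k^{(\vv)}$ (in this order), the remaining $\OO(\tau_k^2)$ contribution works out to $\tfrac{\tau_k^2}{4}\sum_{i<j}B_iB_j\,\uu_k$, which is exactly the quadratic part produced by expanding the fully factored right-hand side of \R{sc1}. Refactoring $\uu_k+\tfrac{\tau_k}{2}\sum_i B_i\uu_k+\tfrac{\tau_k^2}{4}\sum_{i<j}B_iB_j\uu_k$ back into the product form of \R{sc1} then closes the $\uu$-case, and the $\vv$-case follows verbatim under the replacements $\uu\leftrightarrow\vv$, $d_1\to d_2$, $s_1\to s_2$, $c_{12}\to c_{21}$, $\f\to\g$.

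The only genuine difficulty I anticipate is organizational: with six operators in place of the four appearing in Theorem 2.2, there are $\binom{6}{2}=15$ distinct, generally noncommuting, second-order products to track, so the expansion must be carried out systematically --- e.g.\ grouping the pure-$d_1$ product, the mixed $d_1s_1$ and $d_1c_{12}$ products, and the $s_1^2$, $s_1c_{12}$, $c_{12}^2$ products --- to confirm both that every mismatch between an implicit factor and its lagged counterpart picks up the extra power of $\tau_k$ demoting it to $\OO(\tau_k^3)$, and that the surviving quadratic terms emerge in the same left-to-right order as in the factored form \R{sc1}. Apart from this purely mechanical enlargement of the bookkeeping, no idea beyond those already used in the proofs of Theorems 2.1 and 2.2 is required.
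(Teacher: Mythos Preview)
Your proposal is correct and is precisely the approach the paper has in mind: the authors omit the proof of this theorem, remarking only that it ``follows a similar pattern to those of Theorems 2.1 and 2.2,'' and what you outline is exactly that pattern, carried through the longer chain $\tilde{\uu}^{(5)},\dots,\tilde{\uu}^{(1)}$ with six operators in place of four. Your organization of the $\OO(\tau_k^2)$ bookkeeping via $B_1,\dots,B_6$ and the check that $\sum_{i<j}B_iB_j$ matches the expansion of the factored right-hand side of \R{sc1} is a clean way to handle the enlarged combinatorics; nothing further is needed.
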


\section{Stability and Convergence}

We now establish criteria to guarantee the invertibility of specific matrices of the proposed operator splitting method.
\begin{thm} Let $\kappa = \max\{d_1,d_2,s_1,s_2,c_{12},c_{21}\}$. If $\tau_k$ is sufficiently small,
\bb{cfl}
 \frac{\kappa \tau_k}{\delta^2} < \frac{1}{2\max\{1,\max_{j}\{(\emph{\uu}_k)_j,(\emph{\vv}_k)_j\}\}},
\ee
and $\emph{\uu}_k, \emph{\vv}_k \geq 0$ 
, then, for $i=1$ and $2$, the matrices
$$I-\frac{d_i\tau_k}{2}P,~~I-\frac{d_i\tau_k}{2}R,~~I-\frac{s_1\tau_k}{2}PD_{k+1}^{(\emph{\uu})},~~I-\frac{s_1\tau_k}{2}RD_{k+1}^{(\emph{\uu})}, ~~I-\frac{s_2\tau_k}{2}PD_{k+1}^{(\emph{\vv})},~~I-\frac{s_2\tau_k}{2}RD_{k+1}^{(\emph{\vv})},$$
$$I-\frac{c_{12} \tau_k}{2}PD_{k+1}^{(\emph{\vv})},~~I-\frac{c_{12}\tau_k}{2}RD_{k+1}^{(\emph{\vv})}~~I-\frac{c_{21} \tau_k}{2}PD_{k+1}^{(\emph{\uu})},~~I-\frac{c_{21}\tau_k}{2}RD_{k+1}^{(\emph{\uu})},$$
are nonsingular.
\end{thm}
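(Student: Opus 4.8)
The plan is to deduce nonsingularity from strict diagonal dominance, applying Gershgorin's circle theorem to each matrix on the list; the whole argument reduces to a few row-sum estimates. I would first record the structural facts that drive everything: for an interior row $p$ of $T$ the nonzero entries are $1/\delta^2,-2/\delta^2,1/\delta^2$, so $|T_{pp}|=2/\delta^2$ and $\sum_{q\ne p}|T_{pq}|=2/\delta^2$; for the first and last rows the Neumann treatment makes the row sum vanish while keeping $T_{pp}=-2/\delta^2$ and the off-diagonal entries nonnegative, so $\sum_{q\ne p}|T_{pq}|=2/\delta^2$ there as well. Since $P=I_M\otimes T$ is block diagonal and $R=T\otimes I_M$ is block tridiagonal with blocks that are scalar multiples of $I_M$, both $P$ and $R$ inherit $|P_{pp}|=|R_{pp}|=2/\delta^2$ and off-diagonal absolute row sums equal to $2/\delta^2$.

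For the pure-diffusion matrices $I-\frac{d_i\tau_k}{2}P$ and $I-\frac{d_i\tau_k}{2}R$, adding the identity raises every diagonal entry to $1+d_i\tau_k/\delta^2$ while leaving the off-diagonal absolute row sums at $d_i\tau_k/\delta^2$; since $1+d_i\tau_k/\delta^2>d_i\tau_k/\delta^2$, these matrices are strictly diagonally dominant, hence nonsingular, for every $\tau_k>0$, so \eqref{cfl} is not even needed for them.

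For a self- or cross-diffusion matrix, write it as $I-\frac{a\tau_k}{2}PD$ (or the same with $R$ in place of $P$), where $a\in\{s_1,s_2,c_{12},c_{21}\}$, hence $0\le a\le\kappa$, and $D=\mathrm{diag}(q_1,\dots,q_{M^2})$ is $D_{k+1}^{(\uu)}$ or $D_{k+1}^{(\vv)}$. Using $P_{pp}=-2/\delta^2$, the $p$-th diagonal entry of $I-\frac{a\tau_k}{2}PD$ equals $1+\frac{a\tau_k}{\delta^2}q_p$, and the off-diagonal absolute row sum of that row is $\frac{a\tau_k}{2}\sum_{q\ne p}|P_{pq}|\,q_q\le\frac{a\tau_k}{\delta^2}\max_j q_j$. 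Assuming \emph{$q_j\ge0$ for all $j$ and $\max_j q_j\le b:=\max_j\{(\uu_k)_j,(\vv_k)_j\}$}, the diagonal entry is $\ge1$, while \eqref{cfl} together with $a\le\kappa$ gives
\[
\frac{a\tau_k}{\delta^2}\max_j q_j\;\le\;\frac{\kappa\tau_k}{\delta^2}\,b\;<\;\frac{b}{2\max\{1,b\}}\;\le\;\frac12\;<\;1,
\]
so again every row is strictly diagonally dominant and the matrix is nonsingular; the argument with $R$ in place of $P$ is verbatim.

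The row-sum estimates are routine; the step I expect to demand genuine care is justifying the italicized assumption for $D_{k+1}^{(\uu)}$ and $D_{k+1}^{(\vv)}$, whose diagonals are a priori assembled from the next time level. I would dispatch this by recalling that in the implemented scheme these implicit factors are evaluated using the first-order (Euler) approximation of $\uu_{k+1},\vv_{k+1}$, so that each component is $(\uu_k)_j+\OO(\tau_k)$, respectively $(\vv_k)_j+\OO(\tau_k)$; then the hypothesis $\uu_k,\vv_k\ge0$ furnishes the nonnegativity, and the bound $\max_j q_j\le b$ follows once $\tau_k$ is small enough --- which is exactly the extra ``$\tau_k$ sufficiently small'' clause in the statement, the factor $2$ and the strict inequality in \eqref{cfl} supplying the slack that absorbs the $\OO(\tau_k)$ perturbation. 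Controlling this Euler-step perturbation is the crux. As an aside, the upper bound on $\max_j q_j$ can be bypassed altogether by an eigenvalue argument: $T$ is similar, via the positive diagonal scaling $S=\mathrm{diag}(1,\sqrt2,\dots,\sqrt2,1)$, to a symmetric negative semidefinite matrix, so $P$, $R$, and --- since $I_M\otimes S$ and $S\otimes I_M$ commute with the diagonal $D$ --- also $PD$ and $RD$ have real, nonpositive eigenvalues; hence $I-\frac{a\tau_k}{2}PD$ has all eigenvalues $\ge1>0$ and is nonsingular whenever $D\ge0$, with no restriction on $\tau_k$ at all.
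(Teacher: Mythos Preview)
Your argument is correct but takes a different route from the paper. For the constant-coefficient matrices $I-\frac{d_i\tau_k}{2}P$ and $I-\frac{d_i\tau_k}{2}R$ the paper invokes a Neumann-series bound, using $\|T\|\le 4/\delta^2$ together with \eqref{cfl} to obtain $\bigl\|\frac{d_i\tau_k}{2}P\bigr\|\le\frac{2d_i\tau_k}{\delta^2}<1$; your Gershgorin argument is more elementary and, as you observe, needs no restriction on $\tau_k$ for this part. For the variable-coefficient matrices the paper does not use strict diagonal dominance in your form but instead substitutes the Euler-step Taylor expansion of $\uu_{k+1},\vv_{k+1}$ explicitly and then verifies Henrici's weak-row-sum (M-matrix) criterion --- nonpositive off-diagonal entries and nonnegative row sums with at least one strictly positive --- again for $\tau_k$ small. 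Both proofs therefore hinge on exactly the Euler-step perturbation that you correctly flag as the crux. What the paper's M-matrix route buys is the additional conclusion of \emph{inverse positivity}, stated at the end of their proof and useful for positivity preservation downstream; your Gershgorin argument yields only nonsingularity. Your eigenvalue aside is essentially the Sylvester-inertia manoeuvre the paper deploys later in Lemma~4.2, so it is a nice anticipation; note, however, that it still requires $D_{k+1}\ge 0$, so it does not completely escape the ``$\tau_k$ sufficiently small'' control of the $(k{+}1)$-level data.
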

\begin{proof}
In the case of homogeneous Dirichlet or Neumann boundary conditions we have $\|T\|\leq 4/\delta^2.$ Therefore,
\begin{eqnarray*}
\left\| \frac{\tau_k d_1 P}{2} \right\| = \frac{\tau_k d_1}{2} \left\|I_{N+2}\otimes T \right\| = \frac{\tau_k d_1}{2} \| T\| \leq \frac{2 d_1 \tau_k}{\delta^2} < 1,
\end{eqnarray*}
which gives that the matrix $\textstyle I-\frac{d_1\tau_k}{2}P$ is nonsingular.  A similar argument can be established for the matrices $\textstyle I-\frac{d_2\tau_k}{2}P$ and  $\textstyle I-\frac{d_i\tau_k}{2}R$ for $i=1$ and $2$.

For the remaining matrices we substitute Taylor expansions of $\uu_{k+1}$ and $\vv_{k+1}$ into the matrices and then show that the resulting matrices satisfies the weak-row sum criterion \cite{Henrici} (up to the appropriate order), that is:
\begin{enumerate}
\item The nonzero off-diagonal entries share the same sign and are of opposite sign of the nonzero main diagonal entries and;
\item The row sums of the matrix are all nonnegative with at least one row sum strictly greater than zero.
\end{enumerate}

Suppose $c_{21}>0$ and consider the matrix $\textstyle I-\frac{c_{21} \tau_k}{2}PD_{k+1}^{(\uu)}$.  Taylor expanding $\uu_{k+1}$ shows that
\begin{eqnarray*}
I-\frac{c_{21} \tau_k}{2}PD_{k+1}^{(\uu)} = Q + \OO\l(\tau_k^3\r),
\end{eqnarray*}
where
$$ Q = I-\frac{c_{21}\tau_k}{2}PD_{k}^{(\uu)} - \frac{c_{21}\tau_k^2}{2}P\mbox{diag}\l( \left[ (P+R)(d_2+ s_2 D_k^{(\vv)} +  c_{21}D_k^{(\uu)})\right]\uu_k+\f_k\r).$$

Since the splitting scheme is second-order accurate we shall neglect the high-order terms in showing that $Q$ satisfies the weak-row sum criterion. Consider the off diagonal entry:
{\small \bbbb
&&Q_{j,j+1} =-\frac{c_{21}\tau_k}{2\delta^2}\left( (\uu_k)_{j} + \tau_k(\f_k)_{j} + \tau_k \mbox{diag}\left((P+R)(d_1+s_1D_k^{(\uu)} + c_{12}D_k^{(\vv)})\uu_k\right)_{j+1,j+1}\right)\\
&&~~=-\frac{c_{21} \tau_k}{2\delta^2}\left( (\uu_k)_{j} + \tau_k(\f_k)_{j} + \tau_k [(\uu_k)_{j-1}-4(\uu_k)_{j}+(\uu_k)_{j+1}+(\uu_k)_{j+M}+(\uu_k)_{j-M}]\right.\\
&&~ \left. + s_1 \tau_k[(\uu_k)_{j-1}^2-4(\uu_k)_{j}^2+(\uu_k)_{j+1}^2+(\uu_k)_{j+M}^2+(\uu_k)_{j-M}^2]\right.\\
&&~ \left. + c_{12}\tau_k [(\uu_k)_{j-1}(\vv_k)_{j-1}-4(\uu_k)_{j}(\vv_k)_{j}+(\uu_k)_{j+1}(\vv_k)_{j+1}+(\uu_k)_{j+M}(\vv_k)_{j+M}\right. \\
&&~ \left. +(\uu_k)_{j-M}(\vv_k)_{j-M}] \right).
\eeee}
Utilizing Taylor expansions in the spatial coordinates, we see that for $\tau_k$ sufficiently small, $Q_{j,j+1}\le 0,$ since $\vv_k$ and $\f_k$ are assumed to be nonnegative. A virtually identical proof can be established for the off-diagonal entries affected by the Neumann boundary conditions. In addition, a similar calculation shows the lower diagonal of $Q$ is not positive. Therefore, $Q_{i,j}\le 0$ for $i\neq j.$

Next, we consider the row sums and show that these are nonnegative with at least one row sum being strictly positive. Consider the $i^{th}$ row sum of $Q$,
\bbbb
\sum_{j=1}^{M^2}Q_{i,j} &=& 1 - \frac{c_{21}\tau_k}{2\delta^2}\left((\uu_k)_{i-2}-2(\uu_k)_{i-1}+(\uu_k)_{i}\right)\\
&& ~~ - \frac{c_{21}\tau_k^2}{2}\sum_{j=1}^{M^2}\left[P \mbox{diag}\left((P+R)(d_1+s_1D_k^{(\uu)} + c_{12}D_k^{(\vv)})\uu_k+\f_k\right)\right]_{i,j},\eeee
for $i=1,\dots,M^2$. The boundary conditions are $(\uu_k)_{-1} = (\uu_k)_{1}$, $(\vv_k)_{-1} = (\vv_k)_{1}$, $(\uu_k)_{M^2+1} = (\uu_k)_{M^2-1}$, and $(\vv_k)_{M^2+1} = (\vv_k)_{M^2-1}$. Simplifying for $i\neq 1, M^2$ yields,
{\small
\bbbb
\sum_{j=1}^{M^2}Q_{i,j} & > &  \frac{1}{2} + \frac{c_{21}\tau_k}{\delta^2}(\uu_k)_{i-1} \\ && - \frac{c_{21}\tau_k^2}{2}\sum_{j=1}^{M^2}\left[P \mbox{diag}\left((P+R)(d_1+s_1D_k^{(\uu)} + c_{12}D_k^{(\vv)})\uu_k+\f_k\right)\right]_{i,j}\\
& = & \frac{1}{2} + \frac{c_{21}\tau_k}{\delta^2}\left[ (\uu_k)_{i-1} - \frac{\tau_k}{2} \mbox{diag}\left((P+R)(d_1+s_1D_k^{(\uu)} + c_{12}D_k^{(\vv)})\uu_k+\f_k\right)_{i-1,i-1}\right.\\
&& \left.~~~~~~~~ + \tau_k \mbox{diag}\left((P+R)(d_1+s_1D_k^{(\uu)} + c_{12}D_k^{(\vv)})\uu_k+\f_k\right)_{i,i}\right.\\
&& \left.~~~~~~~~ - \frac{\tau_k}{2}\mbox{diag}\left((P+R)(d_1+s_1D_k^{(\uu)} + c_{12}D_k^{(\vv)})\uu_k+\f_k\right)_{i+1,i+1}\right]\\
& = & \frac{1}{2} + \frac{c_{21}\tau_k}{\delta^2}(\uu_k)_{i-1}+ \OO{\left(\frac{\tau_k^2}{\delta^2}\right)}
%
\eeee}
Clearly, $\tau_k$ can be chosen sufficiently small such that the $\OO{\left(\tau_k^2/\delta^2\right)}$ term can be made small enough to ensure the result is positive. A similar result can be established for $i=1$ and $M^2$. Hence, we have that the weak row sum criterion is satisfied. Likewise, the remaining matrices can be shown to be nonsingular and inverse positive using a virtually identical approach.
\end{proof}

We now study the stability of both Eqs.~\R{ufactoredNOADI}-\R{ufactoredNOADI} and Eqs.~\R{sc1}-\R{sc2}. Moreover, we improve upon the previous methodology employed in \cite{BeauPadgett2017} by significantly reducing the previously imposed regularity conditions on the solution.
In order to prove stability of our proposed nonlinear splitting algorithm, we introduce a definition and some lemmas.

\begin{define}
Let $\|\cdot\|$ be an induced matrix norm. Then the associated logarithmic norm $\mu\,:\,\mathbb{C}^{M\times M}\to \RR$ of $A\in\mathbb{C}^{M\times M}$ is defined as
$$\mu(A) = \lim_{h\to 0^+}\frac{\|I + hA\| - 1}{h},$$
where $I\in\mathbb{C}^{M\times M}$ is the identity matrix.
\end{define}

\begin{rem}
When the induced matrix norm being considered is the $\ell^2-$norm, then $\mu(A) = \max\{\lambda\,:\,\lambda\ \mbox{is an eigenvalue of}\ (A+A^*)/2\}.$
\end{rem}



\begin{lemma}
Let $t\in\mathbb{C}$ and $A\in \mathbb{C}^{M\times M}.$ Then for any induced matrix norm $\|\cdot\|$ we have
$$\|E(tA)\| \le E(t\mu(A)),$$
where $E(\cdot)$ is the matrix exponential.
\end{lemma}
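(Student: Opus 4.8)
The plan is to deduce this from the product‑limit representation of the matrix exponential, submultiplicativity of an induced norm, and the first‑order expansion of $\|I+hA\|$ that is built into the definition of $\mu$. Note first that, since $\mu(A)\in\RR$, the quantity $E(t\mu(A))$ is a genuine exponential only when $t\mu(A)\in\RR$; accordingly it suffices to argue for $t\ge 0$ real (the relevant case, and the sharp one — see below), with $t=0$ trivial because $\|E(0)\|=\|I\|=1=E(0)$ for an induced norm.

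First I would invoke the identity $E(tA)=\lim_{n\to\infty}\bigl(I+\tfrac{t}{n}A\bigr)^{n}$, which follows by expanding the $n$th power with the binomial theorem and passing to the limit term by term. Continuity of the norm and submultiplicativity of an induced norm then give
$$\|E(tA)\| \;=\; \lim_{n\to\infty}\Bigl\|\bigl(I+\tfrac{t}{n}A\bigr)^{n}\Bigr\| \;\le\; \lim_{n\to\infty}\Bigl\|I+\tfrac{t}{n}A\Bigr\|^{n},$$
provided the right‑hand limit exists. To evaluate it, apply the definition of the logarithmic norm with $h=t/n\to 0^{+}$: the existence of the limit $\mu(A)=\lim_{h\to0^{+}}(\|I+hA\|-1)/h$ is precisely the statement that $\|I+hA\|=1+h\,\mu(A)+\littleo{h}$, so $\|I+\tfrac{t}{n}A\|=1+\tfrac{t\mu(A)}{n}+\littleo{1/n}$. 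For $n$ large this is positive, hence $n\log\|I+\tfrac{t}{n}A\|=n\log\!\bigl(1+\tfrac{t\mu(A)}{n}+\littleo{1/n}\bigr)=t\,\mu(A)+\littleo{1}\to t\,\mu(A)$, and exponentiating yields $\|I+\tfrac{t}{n}A\|^{n}\to e^{t\mu(A)}=E(t\mu(A))$. Combining with the displayed inequality gives $\|E(tA)\|\le E(t\mu(A))$.

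There is no real obstacle here — the lemma is classical — but the two points to handle with care are (i) confirming that the one‑sided difference quotient defining $\mu(A)$ supplies the expansion $\|I+hA\|=1+h\mu(A)+\littleo{h}$ and that this suffices to control the $n$th power in the limit (it does, since the defining limit exists), and (ii) recording that $t\ge 0$ is essential: for $t<0$ the same computation produces a bound governed by $-\mu(-A)$, which in general satisfies $-\mu(-A)\le\mu(A)$, so the stated form of the inequality is exactly as strong as it can be. For completeness one could instead argue via Dini derivatives: set $\phi(t)=\|E(tA)\|$, use $\phi(t+h)\le\|E(hA)\|\,\phi(t)\le\bigl(\|I+hA\|+\bigo{h^{2}}\bigr)\phi(t)$ to obtain $D^{+}\phi(t)\le\mu(A)\,\phi(t)$, and integrate this differential inequality with $\phi(0)=1$ by the standard comparison lemma; this gives the same conclusion.
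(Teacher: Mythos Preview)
Your argument is correct. The paper does not actually supply a proof of this lemma: it simply cites Golub and Van Loan \cite{Golub} for this standard fact. What you have written is a genuine, self-contained derivation via the product-limit formula $E(tA)=\lim_{n\to\infty}(I+\tfrac{t}{n}A)^n$, submultiplicativity of the induced norm, and the first-order expansion $\|I+hA\|=1+h\mu(A)+o(h)$ built into Definition~4.1; the alternative Dini-derivative route you sketch is equally standard and also valid. You also correctly flag that the hypothesis $t\in\mathbb{C}$ in the stated lemma is loose --- the right-hand side $E(t\mu(A))$ is only a real bound when $t\mu(A)\in\RR$, and the inequality in the sharp direction needs $t\ge 0$ --- which is indeed the only regime in which the paper subsequently invokes the lemma. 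In short: the paper defers to a reference, while you give the classical proof; nothing is missing.
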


\begin{proof}
See \cite{Golub}.
\end{proof}

\begin{lemma}
Assume that \R{cfl} holds and let $L\mathrel{\mathop:}= P+R$, $L_u \mathrel{\mathop:}= d_1 + s_1 D_k^{(\emph{\uu})} + c_{12}D_k^{(\emph{\vv})}$, and $L_v \mathrel{\mathop:}= d_2 + s_2 D_k^{(\emph{\vv})} + c_{21}D_k^{(\emph{\uu})}$ Then we have
$$\l\|E\l(\tau_k L L_u)\r)\r\| \le 1 \quad\mbox{and}\quad \l\|E\l(\tau_k L L_v \r)\r\| \le 1.$$
\end{lemma}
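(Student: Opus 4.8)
The plan is to apply the preceding lemma with $t=\tau_k>0$ and $A=LL_u$ (respectively $A=LL_v$), which reduces the claim to showing that the relevant logarithmic norm of $LL_u$ is nonpositive: once $\mu(LL_u)\le0$, the preceding lemma gives $\|E(\tau_k LL_u)\|\le E(\tau_k\mu(LL_u))\le E(0)=1$. The obstacle is that $LL_u$ is not symmetric — the Neumann stencil makes $T$, hence $L=P+R$, nonsymmetric, and the diagonal factor $L_u$ does not commute with $L$ — so the Euclidean logarithmic norm of Remark~4.1 need not be $\le0$. The fix is to pass to the weighted inner product in which the frozen‑coefficient generator is self‑adjoint and dissipative; this is the discrete analogue of the energy identity $\frac{d}{dt}\int\phi\,u^{2}=-2\int|\nabla(\phi u)|^{2}\le0$ for $u_{t}=\Delta(\phi u)$ with $\phi>0$.

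First I would symmetrize the Laplacian. With the trapezoidal weight $W=\mathrm{diag}(\tfrac12,1,\dots,1,\tfrac12)$ one checks directly that $WT=T^{\top}W$, so $\tilde T:=W^{1/2}TW^{-1/2}$ is symmetric; its spectrum coincides with that of the one‑dimensional discrete Neumann Laplacian and hence lies in $(-\infty,0]$. Setting $\mathcal W:=W\otimes W$ gives $\tilde L:=\mathcal W^{1/2}L\,\mathcal W^{-1/2}=I\otimes\tilde T+\tilde T\otimes I$, a symmetric matrix whose eigenvalues are sums of eigenvalues of $\tilde T$, so $\tilde L\preceq0$.

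Next, since $\uu_{k},\vv_{k}\ge0$ — which holds throughout, as in Theorem~4.1 — and $d_{1}>0$, $s_{1},c_{12}\ge0$, the diagonal matrix $L_u=d_{1}I+s_{1}D_k^{(\uu)}+c_{12}D_k^{(\vv)}$ satisfies $L_u\succeq d_{1}I\succ0$; in particular $L_u^{1/2}$ is real, diagonal, positive definite, and commutes with $\mathcal W$. I would then set $\Sigma:=(\mathcal W L_u)^{1/2}$ and use the induced norm $\|x\|_{\Sigma}:=\|\Sigma x\|_{2}$. A short diagonal computation gives
\[
\Sigma\,(LL_u)\,\Sigma^{-1}=L_u^{1/2}\,\tilde L\,L_u^{1/2},
\]
which is symmetric and negative semidefinite, so by Remark~4.1 its $\ell^{2}$ logarithmic norm equals its largest eigenvalue, which is $\le0$. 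Since the logarithmic norm transforms under the same similarity, $\mu_{\Sigma}(LL_u)=\mu_{2}(L_u^{1/2}\tilde L L_u^{1/2})\le0$, and the preceding lemma applied in $\|\cdot\|_{\Sigma}$ yields $\|E(\tau_k LL_u)\|_{\Sigma}\le1$. The bound for $LL_v$ follows verbatim, using $c_{21}>0$ and $\uu_{k}\ge0$ to get $L_v\succeq d_{2}I\succ0$ and rebuilding $\Sigma$ from $L_v$.

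The main obstacle is the construction of the weight: one must symmetrize the Laplacian (via the quadrature weight $\mathcal W$) and simultaneously absorb the non‑commuting coefficient (via the extra factor $L_u^{1/2}$), and then verify that $L_u^{1/2}\tilde L L_u^{1/2}$ is genuinely negative semidefinite. A point worth stating explicitly is that the contraction holds in the weighted norm $\|\cdot\|_{\Sigma}$, which is equivalent to $\ell^{2}$, so the subsequent stability argument should be carried through in this norm or the equivalence constants tracked; the hypotheses \R{cfl} and $\uu_{k},\vv_{k}\ge0$ enter here only to ensure that $L_u$ and $L_v$ are uniformly positive definite.
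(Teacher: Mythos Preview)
Your argument is correct and shares its central device with the paper's proof: both hinge on the similarity $L_u^{1/2}(LL_u)L_u^{-1/2}=L_u^{1/2}LL_u^{1/2}$ and on the nonpositive spectrum of the discrete Laplacian. The paper works directly in $\ell^2$, invoking Sylvester's law of inertia to transfer the sign of the spectrum of $L$ to $L_u^{1/2}LL_u^{1/2}$ and hence, by similarity, to $LL_u$, and then asserts $\mu(LL_u)\le0$. Your route differs in two respects, and each buys rigor. First, you symmetrize the Neumann stencil explicitly via the trapezoidal weight $\mathcal W$, so that $\tilde L=\mathcal W^{1/2}L\,\mathcal W^{-1/2}$ is genuinely symmetric; the paper's appeal to Sylvester tacitly treats $L$ as symmetric, which is true for the Dirichlet matrix but not for the Neumann matrix $T$ defined in Section~2. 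Second, and more substantively, you pass to the weighted norm $\|\cdot\|_\Sigma$ in which $LL_u$ is similar to the \emph{symmetric} negative semidefinite matrix $L_u^{1/2}\tilde LL_u^{1/2}$, so that Remark~4.1 applies directly; the paper's argument stops at ``the eigenvalues of $LL_u$ are real and nonpositive,'' which for a nonsymmetric matrix does not by itself give $\mu_2(LL_u)\le0$, since the $\ell^2$ logarithmic norm is governed by the spectrum of the symmetric part $(LL_u+L_uL^*)/2$, not by the spectrum of $LL_u$. The cost of your fix, which you correctly flag, is that the contraction estimate lives in $\|\cdot\|_\Sigma$ rather than in $\ell^2$, so the downstream stability argument must either be carried in this norm or absorb the (uniform) norm-equivalence constants coming from $\mathcal W$ and the lower bound $L_u\succeq d_1I$.
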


\begin{proof}
By Lemma 4.1 we have
$$\l\|E\l(\tau_k L L_u \r)\r\| \le E\l(\tau_k\mu\l(L L_u\r)\r),$$
thus we need to show that $\mu\l(LL_u\r)\le 0$ to obtain the desired result. To do so, we need to show that the eigenvalues of the matrix $LL_u + L_u L^*$ are nonpositive.

Note that $L_u$ is diagonal and by \R{cfl} its entries are positive, thus we have $L L_uL_u^{-1/2} = L L_u^{1/2}.$ Then,
\begin{enumerate}
\item $LL_u$ is similar to $L_u^{1/2} L L_u^{1/2}$ with similarity matrix $L_u^{-1/2}$ and
\item $L$ is congruent to $L_u^{1/2} L L_u^{1/2}$ through the matrix $L_u^{-1/2}$.
\end{enumerate}
Therefore by Sylvester's law of inertia we have that the eigenvalues of $L$ are identical to those of $L_u^{1/2} L L_u^{1/2}$, which due to similarity, are the same as those of $LL_u$ \cite{Horn_MatrixAnalysis}.  Clearly, eigenvalues of $L$ are real and nonpositive \cite{Strikwerda}.  Hence, so are the eigenvalues of $LL_u$.  A similar argument gives that $LL_v$ has real and nonpositive eigenvalues. Subsequently, the desired bounds are established.
\end{proof}

\begin{rem}
It is worth noting that Lemma 4.2 holds independently of $k,$ and even $\uu_k$ and $\vv_k.$ All that is necessary is that the entries of the diagonal matrix $D$ are positive.
\end{rem}

\begin{lemma}
Assume that \R{cfl} holds and let $L$, $L_u$, and $L_v$ be defined as in the previous lemma. Then
$$\l\|E\l(\tau_k LL_u)\r) - E\l(\tau_k LL_v\r)\r\| \le C\tau_k\l\|LD_k^{(\emph{\uu}-\emph{\vv})}\r\|.$$
\end{lemma}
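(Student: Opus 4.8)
The plan is to bound the difference of the two matrix exponentials by integrating the derivative of a suitable interpolating family, exactly the standard trick for comparing $E(A)$ and $E(B)$. Define, for $\theta\in[0,1]$, the matrix-valued path $A(\theta) \mathrel{\mathop:}= \tau_k L\big((1-\theta)L_v + \theta L_u\big)$, so that $A(0) = \tau_k L L_v$ and $A(1) = \tau_k L L_u$. Writing $\Phi(\theta) = E(A(\theta))$, I would use the identity
$$\Phi(1) - \Phi(0) = \int_0^1 \frac{d}{d\theta}E\big(A(\theta)\big)\,d\theta = \int_0^1 \int_0^1 E\big(sA(\theta)\big)\,A'(\theta)\,E\big((1-s)A(\theta)\big)\,ds\,d\theta,$$
where $A'(\theta) = \tau_k L (L_u - L_v) = \tau_k L D_k^{(\uu-\vv)}$, since $L_u - L_v = d_1 - d_2 + s_1 D_k^{(\uu)} - s_2 D_k^{(\vv)} + c_{12}D_k^{(\vv)} - c_{21}D_k^{(\uu)}$ — wait, this is \emph{not} simply a multiple of $D_k^{(\uu-\vv)}$ in general, so the precise content of the claimed bound depends on the intended reading of $L_u - L_v$; I will take the coefficients to be matched (or absorb the mismatch into the constant $C$) as the statement evidently intends, so that $A'(\theta) = \tau_k L D_k^{(\uu-\vv)}$ up to the constant.

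Taking norms and using submultiplicativity gives
$$\big\|\Phi(1) - \Phi(0)\big\| \le \tau_k \big\|L D_k^{(\uu-\vv)}\big\| \int_0^1 \int_0^1 \big\|E\big(sA(\theta)\big)\big\|\,\big\|E\big((1-s)A(\theta)\big)\big\|\,ds\,d\theta.$$
The key point is that each convex combination $(1-\theta)L_v + \theta L_u$ is again a diagonal matrix whose entries are positive under the hypothesis \R{cfl} together with $\uu_k,\vv_k\ge 0$ (the entries are convex combinations of $d_2 + s_2(\vv_k)_j + c_{21}(\uu_k)_j$ and $d_1 + s_1(\uu_k)_j + c_{12}(\vv_k)_j$, both positive). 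Hence, exactly as in Lemma 4.2 and the remark following it — which only requires positivity of the diagonal factor — we get $\mu\big(L[(1-\theta)L_v + \theta L_u]\big)\le 0$ for every $\theta$, and therefore by Lemma 4.1, $\big\|E\big(sA(\theta)\big)\big\| \le E\big(s\tau_k\mu(\cdots)\big) \le 1$ for all $s\in[0,1]$ and all $\theta\in[0,1]$, and likewise for the $(1-s)$ factor. The double integral is thus bounded by $1$, leaving $\big\|\Phi(1)-\Phi(0)\big\| \le C\tau_k \big\|L D_k^{(\uu-\vv)}\big\|$ with $C = 1$ (or the constant absorbing the coefficient mismatch noted above).

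The main obstacle is conceptual rather than computational: ensuring the interpolant $L_\theta \mathrel{\mathop:}= (1-\theta)L_v + \theta L_u$ genuinely falls under the umbrella of Lemma 4.2. This needs (i) that $L_\theta$ is diagonal with positive entries — immediate from convexity and \R{cfl} — and (ii) that the Sylvester-law-of-inertia / congruence argument in Lemma 4.2 applies verbatim with $L_u$ replaced by $L_\theta$, which it does since that argument used \emph{only} diagonality and positivity of the diagonal factor, as the remark explicitly emphasizes. A secondary subtlety is the precise form of $L_u - L_v$: if one does not assume the diffusion coefficients are matched, then $A'(\theta)$ differs from $\tau_k L D_k^{(\uu-\vv)}$ by a term $\tau_k L\big((d_1-d_2)I + (s_1-s_2)(\text{diag terms}) + \dots\big)$, which must either be folded into $C$ or handled under the standing assumptions; I would state explicitly which convention is in force. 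Everything else — the integral representation of the exponential's derivative, the norm estimates — is routine.
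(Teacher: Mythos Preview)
Your argument is correct and takes a genuinely different route from the paper. The paper represents each exponential via the inverse Laplace transform, applies the second resolvent identity to write
\[
\l(sI-\tau_k X_k^{(\uu)}\r)^{-1}-\l(sI-\tau_k X_k^{(\vv)}\r)^{-1}
=\l(sI-\tau_k X_k^{(\uu)}\r)^{-1}\tau_k\l(X_k^{(\uu)}-X_k^{(\vv)}\r)\l(sI-\tau_k X_k^{(\vv)}\r)^{-1},
\]
and then bounds each resolvent by $|s^{-1}|$ through the Laplace transform together with Lemma~4.2; the constant $C$ emerges from the contour integral $\int_\Gamma |e^s|\,|s|^{-2}\,|ds|$. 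Your approach instead interpolates linearly between $L_v$ and $L_u$ and invokes the Duhamel formula for $\frac{d}{d\theta}E(A(\theta))$, bounding the inner exponentials directly by Lemma~4.2 (via the remark that only positivity of the diagonal factor is needed). Your method is more elementary --- no contour integration --- and yields an explicit constant, while the paper's resolvent argument is the classical semigroup-theoretic device and generalizes more readily to unbounded generators. Both proofs hinge on Lemma~4.2 in the same essential way.

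Your observation about $L_u-L_v$ not reducing to a scalar multiple of $D_k^{(\uu-\vv)}$ is well taken: the paper writes $\|X_k^{(\uu)}-X_k^{(\vv)}\|\le\|LD_k^{(\uu-\vv)}\|$ ``by direct calculation,'' which as stated requires either matched coefficients or absorbing the discrepancy into $C$, exactly as you note. This is a shared issue, not a defect of your argument.
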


\begin{proof}
For convenience, we employ the notation $X_k^{(\uu)} \mathrel{\mathop:}= L L_u$ and $X_k^{(\vv)} \mathrel{\mathop:}= LL_v.$
By employing the inverse Laplace transform and the second resolvent identity \cite{Hille}, we have
\bbbb
&& E\l(\tau_k X_k^{(\uu)}\r) - E\l(\tau_k X_k^{(\vv)}\r) = \frac{1}{2\pi i}\int_{\Gamma} e^s\l[\l(sI-\tau_k X_k^{(\uu)}\r)^{-1} - \l(sI - \tau_k X_k^{(\vv)}\r)^{-1}\r]\,ds\nnn\\
&&~~~ = \frac{1}{2\pi i}\int_{\Gamma} e^s\l(sI - \tau_kX_k^{(\uu)}\r)^{-1}\tau_k\l(X_k^{(\uu)} - X_k^{(\vv)}\r)\l(sI - \tau_k X_k^{(\vv)}\r)^{-1}\,ds,
\eeee
where $\Gamma$ is a path surrounding the spectrum of the matrices $X_k^{(\uu)}$ and $X_k^{(\vv)}.$ Taking the norm of the above gives
\begin{eqnarray*}
&&\l\|E\l(\tau_k X_k^{(\uu)}\r) - E\l(\tau_k X_k^{(\vv)}\r)\r\| \le \\
&& ~~~\frac{\tau_k}{2\pi}\int_{\Gamma}|e^s|\l\|\l(sI - \tau_kX_k^{(\uu)}\r)^{-1}\r\|\l\|\l(X_k^{(\uu)} - X_k^{(\vv)}\r)\r\|\l\|\l(sI - \tau_k X_k^{(\vv)}\r)^{-1}\r\|\,|ds|.
\end{eqnarray*}
We proceed by developing separate bounds for the matrices in the above expression. Note that by the Laplace transform and Lemma 4.2, we have
\begin{eqnarray*}
\l\|\l(sI - \tau_kX^{(\uu)}\r)^{-1}\r\| &=& \l\|\int_0^\infty e^{-st}E\l(\tau_kX_k^{(\uu)}t\r)\,dt\r\| \\
&\le& \int_0^\infty |e^{-st}|\l\|E\l(\tau_kX_k^{(\uu)}t\r)\r\|\,|dt|\le |s^{-1}|.
\end{eqnarray*}
We similarly derive the bound
$$\l\|\l(sI - \tau_k X_k^{(\vv)}\r)^{-1}\r\| \le |s^{-1}|.$$
Further, we have by direct calculation
$$\l\|\l(X_k^{(\uu)} - X_k^{(\vv)}\r)\r\| \le \l\|LD_k^{(\uu)} - LD_k^{(\vv)}\r\| = \l\|LD_k^{(\uu-\vv)}\r\|.$$

Combining the above yields
$$
\l\|E\l(\tau_k X_k^{(\uu)}\r) - E\l(\tau_k X_k^{(\vv)}\r)\r\| \le \frac{\tau_k}{2\pi}\int_{\Gamma} |e^s|\l\|LD_k^{(\uu-\vv)}\r\||s^{-2}|\,|ds| \le C\tau_k\l\|LD_k^{(\uu-\vv)}\r\|,$$
which is the desired result.
\end{proof}





\begin{lemma}
Assume that \R{cfl} holds, $u,v\in H^1(\Omega),$ and that
$$\emph{\uu}_{k+1} = E\l(\tau_k L(d_1+s_1 D_{k+1/2}^{(\emph{\uu})} + c_{12} D_{k+1/2}^{(\emph{\vv})})\r)\emph{\uu}_{k} + \OO\l(\tau_k^3\r)$$
and
$$\emph{\vv}_{k+1} = E\l(\tau_k L(d_2+s_2D_{k+1/2}^{(\emph{\vv})}+c_{21}D_{k+1/2}^{(\emph{\uu})})\r)\emph{\vv}_{k} + \OO\l(\tau_k^3\r).$$
Then
$$\l\|LD_{k+1}^{(\emph{\uu}-\emph{\vv})}\r\| \le C$$
for some positive constant $C$.
\end{lemma}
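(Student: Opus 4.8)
The plan is to reduce the claim to a uniform sup-norm bound on the iterates $\uu_k,\vv_k$ and then to pass to the target quantity through the coarse estimate $\|LD_{k+1}^{(\uu-\vv)}\|\le\|L\|\,\|\uu_{k+1}-\vv_{k+1}\|_\infty$. Accordingly, the constant $C$ is permitted to depend on the (fixed) spatial mesh, on the data, and on a fixed final time $\mathcal{T}$ — the regime in which a time-stability estimate lives — and the whole argument runs over the time levels $k$ with $t_k\le\mathcal{T}$.

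The first step is to establish $\|\uu_k\|_\infty,\|\vv_k\|_\infty\le B$ uniformly in $k$. Starting from the hypothesized representation $\uu_{k+1}=E\l(\tau_k L(d_1+s_1D_{k+1/2}^{(\uu)}+c_{12}D_{k+1/2}^{(\vv)})\r)\uu_k+\OO(\tau_k^3)$, I would observe that under \R{cfl} together with the standing nonnegativity of the iterates the diagonal matrix $d_1+s_1D_{k+1/2}^{(\uu)}+c_{12}D_{k+1/2}^{(\vv)}$ has entries bounded below by $d_1>0$, so Lemma 4.2 and the remark following it give $\|E(\tau_k LL_u^{(k+1/2)})\|\le1$; moreover the M-matrix/weak-row-sum structure exploited in the proof of Theorem 4.1 transfers to the exponential, so that $E(\tau_k LL_u^{(k+1/2)})$ is entrywise nonnegative with row sums at most one, hence a contraction on $\ell^\infty$ as well. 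Telescoping the representation and summing the $\OO(\tau_k^3)$ remainders — which contribute only $\OO(\mathcal{T}\max_j\tau_j^2)$ over the horizon — then yields $B$, depending only on $\|u_0\|_\infty$, $\|v_0\|_\infty$, and $\mathcal{T}$.

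Given Step 1, since $D_{k+1}^{(\uu-\vv)}=\mbox{diag}(\uu_{k+1}-\vv_{k+1})$ we have $\|D_{k+1}^{(\uu-\vv)}\|=\|\uu_{k+1}-\vv_{k+1}\|_\infty\le2B$, and combining with $\|L\|=\|P+R\|\le8/\delta^2$ (from $\|T\|\le4/\delta^2$, as in the proof of Theorem 4.1) gives $\|LD_{k+1}^{(\uu-\vv)}\|\le\|L\|\,\|D_{k+1}^{(\uu-\vv)}\|\le16B/\delta^2=:C$, which is the claim. If a finer tracking of the mesh dependence is wanted, one can instead write $\uu_{k+1}-\vv_{k+1}=E(\tau_k LL_u^{(k+1/2)})(\uu_k-\vv_k)+\bigl[E(\tau_k LL_u^{(k+1/2)})-E(\tau_k LL_v^{(k+1/2)})\bigr]\vv_k+\OO(\tau_k^3)$, bound the bracketed difference of exponentials by Lemma 4.3 (picking up a factor $\tau_k\|LD_{k+1/2}^{(\uu-\vv)}\|$), and close a discrete Gronwall recursion for $a_k:=\|LD_k^{(\uu-\vv)}\|$ from the finite starting value $a_0$ (finite because $u_0-v_0\in C^1(\Omega)\subset H^1(\Omega)$); this does not circle back on Lemma 4.3, since that lemma only lowers the time index.

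I expect the crux to be the point in Step 1 where the reaction-free, midpoint-frozen exponential form is upgraded to a genuine $\ell^\infty$ contraction: this requires transferring the inverse-positivity established for $I-\frac{\tau_k}{2}(\cdots)$ in Theorem 4.1 to nonnegativity of $E(\tau_k LL_u^{(k+1/2)})$, and it leans on the a priori nonnegativity $\uu_k,\vv_k\ge0$ holding at every step, which is a standing hypothesis rather than anything proved here. A secondary subtlety is the precise role of $u,v\in H^1(\Omega)$: it enters only to guarantee that the initial difference and the accumulated $\OO(\tau_k^3)$ consistency remainders are finite and controlled, and because those remainders appear only summed over $\OO(\mathcal{T}/\tau)$ steps and multiplied by the fixed factor $\|L\|$, the weak regularity is absorbed into $C$ rather than causing blow-up — which is the sense in which the stronger regularity demands of \cite{BeauPadgett2017} are relaxed here.
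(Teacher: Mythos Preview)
Your argument is internally consistent but misses the point of the lemma, and in particular it does not use the hypothesis $u,v\in H^1(\Omega)$ in any essential way --- which should already be a warning sign. Your main bound $\|LD_{k+1}^{(\uu-\vv)}\|\le\|L\|\,\|\uu_{k+1}-\vv_{k+1}\|_\infty\le 16B/\delta^2$ produces a constant that blows up like $\delta^{-2}$ as the mesh is refined. You explicitly permit this, but the paper does not: the whole reason for the $H^1$ hypothesis and for this lemma's placement before Theorem~4.2 is to obtain a $C$ that is \emph{uniform in the spatial discretisation}, so that when it is fed into the estimate \R{bbdd} the resulting stability constants $K_1,K_2$ do not degenerate as $\delta\to 0$.

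The paper's route is quite different. It factors $L=\delta^{-2}(\delta^2L)$ so that $\|\delta^2L\|$ is bounded independently of $\delta$, and then it must control $\delta^{-2}(\uu_{k+1}-\vv_{k+1})$. To do this it rewrites the exponential representation as $\delta^{-2}\uu_{k+1}=(\delta^2X_k^{(\vv)})^{-1}E(\tau_kX_k^{(\vv)})X_k^{(\vv)}\uu_k$ (and similarly for $\vv$), bounds $\|(\delta^2X_k^{(\vv)})^{-1}\|$ and the exponential uniformly, and then invokes the $H^1$ regularity together with the discrete-Laplacian comparison in \cite{Ditz} to get $\|X_k^{(\vv)}\uu_k\|\le C_1$ and $\|X_k^{(\uu)}\vv_k\|\le C_2$ uniformly in $\delta$. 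That last step is exactly where the $H^1$ assumption enters, and it is what your argument lacks. Your ``finer tracking'' sketch via Lemma~4.3 and a Gronwall recursion on $a_k=\|LD_k^{(\uu-\vv)}\|$ is closer in spirit, but note that its seed $a_0=\|LD_0^{(\uu-\vv)}\|$ is itself of size $\delta^{-2}$ unless you appeal to a Ditzian-type result linking the discrete Laplacian to the continuous $H^1$ norm --- so even that alternative route must eventually pass through the regularity hypothesis.
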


\begin{proof}
By employing the notation from Lemma 4.3, we have
$$
\delta^{-2}\uu_{k+1} = (\delta^2X_k^{(\vv)})^{-1}\l[E\l(\tau_kX^{(\vv)}\r)\r]X_k^{(\vv)}\uu_k
$$
and
$$\delta^{-2}\vv_{k+1} = (\delta^2X_k^{(\uu)})^{-1}\l[E\l(\tau_kX^{(\uu)}\r)\r]X_k^{(\uu)}\vv_k.$$
Thus, it follows that
$$
\l\|LD_{k+1}^{(\uu-\vv)}\r\| \le \|\delta^2L\|\|\textbf{D}\|\le C_1\|\textbf{D}\|,$$
where
$$\textbf{D}\mathrel{\mathop:}= (\delta^2X_k^{(\vv)})^{-1}\l[E\l(\tau_kX^{(\vv)}\r)\r]X_k^{(\vv)}\uu_k - (\delta^2X_k^{(\uu)})^{-1}\l[E\l(\tau_kX^{(\uu)}\r)\r]X_k^{(\uu)}\vv_k.$$
Taking the norm gives
\bbb
\|\textbf{D}\| &\le & \l\|(\delta^2X_k^{(\vv)})^{-1}\r\|\l\|E\l(\tau_kX^{(\vv)}\r)\r\|\l\|X_k^{(\vv)}\uu_k\r\| \nnn \\
& & + \l\|(\delta^2X_k^{(\uu)})^{-1}\r\|\l\|E\l(\tau_kX^{(\uu)}\r)\r\|\l\|X_k^{(\uu)}\vv_k\r\|\nnn\\
& \le & C\l\|X_k^{(\vv)}\uu_k\r\| + C \l\|X_k^{(\uu)}\vv_k\r\|.\label{dd11}
\eee
By our assumptions and \cite{Ditz} we have
\bb{dd12}
\l\|X_k^{(\vv)}\uu_k\r\| \le C_1\quad\mbox{and}\quad \l\|X_k^{(\uu)}\vv_k\r\|\le C_2.
\ee
Applying \R{dd12} to \R{dd11} yields the desired result.
\end{proof}

In the following theorem we employ the following product notation for non-commuting matrices $A_j\in\mathbb{C}^{M\times M}:$
$$\prod_{j=m}^k A_j = \l\{\begin{array}{ll}
A_kA_{k-1}\cdots A_m, & \mbox{if}\ k\ge m,\\
I, & \mbox{if}\ k<m.
\end{array}\r.$$

\begin{thm}
Assume that \R{cfl} holds, $u,v\in H^1(\Omega),$ and let $\tau_j,\ j=0,\ldots,k,$ be sufficiently small. Then it follows that the schemes \R{ufactoredNOADI}-\R{ufactoredNNOADI} are stable in the sense that there exists $K_1,K_2>0$ such that
$$\l\|\emph{\z}_{k+1}^{(\emph{\uu})}\r\| \le K_1\quad\mbox{and}\quad \l\|\emph{\z}_{k+1}^{(\emph{\vv})}\r\|\le K_2,$$
where $\emph{\z}_0^{(\emph{\uu})} = \emph{\uu}_0 - \tilde{\emph{\uu}}_0$ and $\emph{\z}_0^{(\emph{\vv})} = \emph{\vv}_0 - \tilde{\emph{\vv}}_0$ are initial errors, $\emph{\z}_{k+1}^{(\emph{\uu})}\mathrel{\mathop:}= \emph{\uu}_{k+1}-\tilde{\emph{\uu}}_{k+1}$ and $\emph{\z}_{k+1}^{(\emph{\vv})}\mathrel{\mathop:}= \emph{\vv}_{k+1}-\tilde{\emph{\vv}}_{k+1}$ are the $(k+1)$th perturbed error vectors, and $K_1$ and $K_2$ are independent of $k$ and $\tau_k.$
\end{thm}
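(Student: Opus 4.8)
The plan is to derive a recursion for the perturbed error vectors $\z_{k+1}^{(\uu)}$ and $\z_{k+1}^{(\vv)}$ and then iterate it, controlling the growth by means of the exponential bounds from Lemmas 4.2--4.4 together with a discrete Gr\"onwall argument. First I would rewrite the scheme \R{ufactoredNOADI}--\R{ufactoredNNOADI}, using Theorem 2.1 to replace the factored form by the Crank--Nicolson step \R{crank1}--\R{crank2} up to $\OO(\tau_k^3)$, and then recognize (as in the hypotheses of Lemma 4.5) that one Crank--Nicolson step is, up to $\OO(\tau_k^3)$, the exponential propagator $E(\tau_k L(d_1 + s_1 D_{k+1/2}^{(\uu)} + c_{12}D_{k+1/2}^{(\vv)}))$ applied to $\uu_k$; write this propagator as $E(\tau_k X_k^{(\vv)})$ in the notation of Lemmas 4.3--4.4 (with the appropriate midpoint diagonal). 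Subtracting the exact-scheme relation from the perturbed-scheme relation, the difference of the two propagators acting on $\uu_k$ versus on $\tilde{\uu}_k$ splits as $E(\tau_k X_k)\z_k^{(\uu)} + (E(\tau_k X_k) - E(\tau_k \tilde X_k))\tilde{\uu}_k + \OO(\tau_k^3)$, where $X_k$ and $\tilde X_k$ differ only through $D_k^{(\uu)},D_k^{(\vv)}$ versus $D_k^{(\tilde{\uu})},D_k^{(\tilde{\vv})}$.

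Next I would bound each piece. The first term has norm $\le \|\z_k^{(\uu)}\|$ by Lemma 4.2 (the exponential propagator is a contraction whenever \R{cfl} holds, so that $D$ has positive entries). For the second term, Lemma 4.3 gives $\|E(\tau_k X_k) - E(\tau_k \tilde X_k)\| \le C\tau_k \|L D_k^{(\uu-\tilde{\uu})}\| + C\tau_k\|L D_k^{(\vv - \tilde{\vv})}\|$ after noting that the difference $X_k - \tilde X_k$ is a sum of terms of the form $L\cdot(\text{diagonal difference})$; Lemma 4.4 (whose hypotheses are exactly the exponential-propagator form just established, with $u,v\in H^1(\Omega)$) controls $\|L D_{k}^{(\uu - \vv)}\|$ by a constant, and the same argument applied to the pairs $(\uu_k,\tilde{\uu}_k)$ and $(\vv_k,\tilde{\vv}_k)$ gives $\|L D_k^{(\uu-\tilde{\uu})}\| \le C(\|\z_k^{(\uu)}\| + \|\z_k^{(\vv)}\|)$ and similarly for $\vv$. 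Since $\|\tilde{\uu}_k\|$ is itself bounded (again by Lemmas 4.2 and 4.4 applied to the exact scheme), the second term is $\le C\tau_k(\|\z_k^{(\uu)}\| + \|\z_k^{(\vv)}\|)$. Combining, and writing $e_k := \|\z_k^{(\uu)}\| + \|\z_k^{(\vv)}\|$ (the analogous estimate for $\vv$ is identical), we get $e_{k+1} \le (1 + C\tau_k)e_k + C\tau_k^3$.

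Finally I would iterate: using the product notation for the propagators, $e_{k+1} \le \prod_{j=0}^{k}(1+C\tau_j)\, e_0 + \sum_{j=0}^{k}\prod_{i=j+1}^{k}(1+C\tau_i)\,C\tau_j^3$. Since $\sum_{j}\tau_j = t_{k+1} \le T$ is bounded on the time interval of interest and each $\tau_j$ is sufficiently small, $\prod_j(1+C\tau_j) \le \exp(C\sum_j\tau_j) \le e^{CT} =: K$, and the accumulated local terms are $\le K\sum_j C\tau_j^3 \le K C T \max_j \tau_j^2$, also bounded. Hence $e_{k+1} \le K(e_0 + 1)$ with $K$ independent of $k$ and $\tau_k$, which yields the claimed $K_1,K_2$. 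I expect the main obstacle to be the bookkeeping in the second step: justifying that the perturbed diagonal matrices $D_k^{(\tilde{\uu})}$ still have positive entries (so that Lemma 4.2 applies to the perturbed propagator) and that $\|L D_k^{(\uu-\tilde{\uu})}\|$ can legitimately be bounded in terms of $e_k$ rather than merely by a constant --- this requires applying the resolvent/Laplace-transform machinery of Lemma 4.4 to the \emph{difference} of two scheme iterates, which is where the $H^1$ regularity and \R{cfl} are both genuinely used, and where one must be careful that the $\OO(\tau_k^3)$ remainders do not degrade the order of the local error in the recursion.
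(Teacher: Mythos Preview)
Your outline is close in spirit to the paper's proof: both pass from the factored Crank--Nicolson step to an exponential propagator $E(\tau_k L L_u)$ (up to $\OO(\tau_k^3)$), decompose the error as a contraction term plus a propagator-difference term, and invoke Lemmas~4.2--4.4. The paper, however, iterates all the way back to step~$0$ and uses the telescoping identity
\[
\prod_{j=0}^k F_j - \prod_{j=0}^k G_j \;=\; \sum_{j=0}^k\Bigl[\prod_{i=j+1}^{k}F_i\Bigr](F_j-G_j)\Bigl[\prod_{i=0}^{j-1}G_i\Bigr],
\]
bounding each outer product by~$1$ via Lemma~4.2 and each middle factor $\|F_j-G_j\|$ by $C\tau_j\|LD_{j+1/2}^{(\z^{(\vv)})}\|$ via Lemma~4.3. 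No Gr\"onwall factor $(1+C\tau_k)$ ever appears.

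The one genuine misstep in your plan is the claim that ``the same argument'' as Lemma~4.4, applied to the pair $(\uu_k,\tilde{\uu}_k)$, yields $\|LD_k^{(\uu-\tilde{\uu})}\|\le C(\|\z_k^{(\uu)}\|+\|\z_k^{(\vv)}\|)$. Lemma~4.4 does not give a bound proportional to the error: its proof produces only a \emph{constant} bound $\|LD_k^{(\z)}\|\le C$ (coming from the $H^1$ control of $X_k^{(\vv)}\uu_k$ via \cite{Ditz}), and indeed a bound $\le C\|\z_k\|$ with $C$ independent of $\delta$ is impossible in general since $\|L\|\sim\delta^{-2}$. Fortunately this is not an obstacle but a simplification: with the constant bound, your second term is $\le C\tau_k\|\tilde{\uu}_k\|\le C'\tau_k$, so the recursion collapses to $\|\z_{k+1}^{(\uu)}\|\le\|\z_k^{(\uu)}\|+C'\tau_k+\OO(\tau_k^3)$, which telescopes directly to $\|\z_0^{(\uu)}\|+C'T$ --- precisely the paper's final estimate, with no Gr\"onwall exponential. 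The obstacle you anticipated (forcing a bound in terms of $e_k$) therefore does not arise, and the positivity of the perturbed diagonals is handled exactly as you suspected, by assuming \R{cfl} holds for the perturbed iterates as well.
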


\begin{proof}
We first consider \R{ufactoredNOADI} and, for the time, disregard the nonlinear reaction term. Thus, we have
\bbb
\uu_{k+1} &=& \l(I-\frac{c_{12}\tau_k}{2}RD_{k+1}^{(\vv)}\r)^{-1}\l(I-\frac{c_{12}\tau_k}{2}PD_{k+1}^{(\vv)}\r)^{-1}\l(I-\frac{d_1\tau_k}{2}R\r)^{-1}\l(I-\frac{d_1\tau_k}{2}P\r)^{-1}\nnn\\
&&~~~\times \l(I+\frac{d_1\tau_k}{2}P\r)
\l(I+\frac{d_1\tau_k}{2}R\r)
\l(I+\frac{c_{12}\tau_k}{2}PD_{k}^{(\vv)}\r)
\l(I+\frac{c_{12}\tau_k}{2}RD_k^{(\vv)}\r)\uu_k\nnn\\
& \mathrel{\mathop:}= & M(\tau_k,\vv_k,\vv_{k+1})\uu_k.\label{mag1}
\eee
It will be convenient to appeal to a Magnus-type representation of \R{mag1} \cite{Iserles1,Iserles2}. That is, by employing the standard nonlinear Magnus-type integration and exponential splitting theory, we have
\bbb
&& M(\tau_k,\vv_k,\vv_{k+1}) = E\l(\frac{c_{12}\tau_k}{2}RD_{k+1}^{(\vv)}\r)E\l(\frac{c_{12}\tau_k}{2}PD_{k+1}^{(\vv)}\r)E\l(\frac{d_1\tau_k}{2}R\r)E\l(\frac{d_1\tau_k}{2}P\r)\nnn\\
&&~~\times E\l(\frac{d_1\tau_k}{2}P\r)E\l(\frac{d_1\tau_k}{2}R\r)E\l(\frac{c_{12}\tau_k}{2}PD_k^{(\vv)}\r)E\l(\frac{c_{12}\tau_k}{2}RD_k^{(\vv)}\r) + \OO\l(\tau_k^3\r)\nnn\\
&& =  E\l(\tau_k L(I+D_{k+1/2}^{(\vv)})\r) + \OO\l(\tau_k^3\r),\label{mag2}
\eee
where $L\mathrel{\mathop:}= P + R,$ $D_{k+1/2}^{(\vv)} = D(\vv_{k+1/2}),$ and $t_{k+1/2}\mathrel{\mathop:}=t_k+\tau_k/2.$ We now consider perturbations of the numerical solutions $\uu_k$ and $\vv_k,$ represented as
\bbb
\tilde{\uu}_{k+1} &=& E\l(\tau_k L(I+D_{k+1/2}^{(\tilde{\vv})})\r)\tilde{\uu}_k + \OO\l(\tau_k^3\r)\label{pert1}
\eee
and
\bbb
\tilde{\vv}_{k+1} &=& E\l(\tau_k L(I+D_{k+1/2}^{(\tilde{\uu})})\r)\tilde{\vv}_k + \OO\l(\tau_k^3\r)\label{pert2}
\eee
Let $\z^{(\uu)}_{k}\mathrel{\mathop:}= \uu_k - \tilde{\uu}_k$ and $\z^{(\vv)}_k\mathrel{\mathop:}= \vv_k - \tilde{\vv}_k.$ Iterating \R{mag1} by employing \R{mag2} yields
\bb{u1}
\uu_{k+1} = \l[\prod_{j=0}^k E\l(\tau_jL(I+D_{k+1/2}^{(\vv)}\r)\r]\uu_0 + \sum_{j=0}^k\OO\l(\tau_j^3\r)
\ee
and iterating \R{pert1} yields
\bb{uu1}
\tilde{\uu}_{k+1} = \l[\prod_{j=0}^k E\l(\tau_jL(I+D_{k+1/2}^{(\tilde{\vv)}}\r)\r]\tilde{\uu}_0 + \sum_{j=0}^k\OO\l(\tau_j^3\r).
\ee
Subtracting \R{uu1} from \R{u1} gives
\bbbb
\z_{k+1}^{(\uu)} = \Psi_1\z_0^{(\uu)} + \Psi_2\tilde{\uu}_0 + \sum_{j=0}^k\OO\l(\tau_k^3\r),
\eeee
where
$$\Psi_1\mathrel{\mathop:}= \prod_{j=0}^k E\l(\tau_jL(I+D_{j+1/2}^{(\vv)})\r)\quad\mbox{and}\quad\Psi_2\mathrel{\mathop:}= \prod_{j=0}^k F_j - \prod_{j=0}^k G_j,$$
with
$$F_j \mathrel{\mathop:}= E\l(\tau_jL(I+D_{j+1/2}^{(\vv)})\r)\quad\mbox{and}\quad G_j \mathrel{\mathop:}= E\l(\tau_jL(I+D_{j+1/2}^{(\tilde{\vv})})\r).$$

We first consider $\Psi_1.$ To that end, by Lemma 4.2, we have
$$\|\Psi_1\| \le \prod_{j=0}^k E\l(\tau_j\mu\l(L(I+D_{j+1/2}^{(\vv)})\r)\r) \le 1.$$
Now we consider $\Psi_2.$ By manipulating $\Psi_2,$ we arrive at the following expression
\bb{psi2}
\Psi_2 = \sum_{j=0}^k\l[\prod_{i=j+1}^{k}F_i\r]\l(F_j-G_j\r)\l[\prod_{i=0}^{j-1} G_{n+1-i}\r].
\ee
Taking the norm of both sides of \R{psi2} and employing Lemma 4.3 gives
\bb{bd3}
\|\Psi_2\| \le  \sum_{j=0}^k \l[\prod_{i=j-1}^{k} \|F_i\|\r] \|F_j-G_j\|\l[\prod_{i=0}^{j-1} \|G_{n+1-i}\|\r] \le 
\sum_{j=0}^k C\tau_j\l\|LD_{j+1/2}^{(\z^{(\vv)})}\r\|,
\ee
where $C$ is a positive constant. By our assumptions and Lemma 4.4, we have
\bb{bbdd}
\|\Psi_2\| \le \sum_{j=0}^k C\tau_j K \le C_1,
\ee
where $C_1$ is a constant independent of $\delta$ and $\tau_j,\ j=0,\ldots,k.$

Combining the above gives
\bbbb
\|\z_{k+1}^{(\uu)}\| & \le & \|\z_0^{(\uu)}\| + C_1\|\tilde{\uu}_0\| + \l\|\sum_{j=0}^k\OO\l(\tau_j^3\r)\r\|\nnn\\
& \le & \|\z_0^{(\uu)}\| + C_1\|\tilde{\uu}_0\| + C_2\max_{j=0,\ldots,k}\l\{\tau_j^2\r\}\sum_{j=0}^k \tau_j\\
& \le & \|\z_0^{(\uu)}\| + C_1\|\tilde{\uu}_0\| + C_3T \equiv K_1,\label{stab1}
\eeee
where $C_2$ and $C_3$ are independent of $j,\ \tau_j,$ $j=0,\ldots,k.$ By employing similar arguments with \R{ufactoredNNOADI}, it can be shown that $\|\z_{k+1}^{(\vv)}\| \le \|\z_0^{(\vv)}\| + C_1\|\tilde{\vv}_0\| + C_3T \equiv K_2,$ after possibly rescaling $C_1$ and $C_3.$

We now consider the nonlinear reaction term in the analysis of \R{ufactoredNOADI}. Recall that $\f_k = \f(\uu_k,\vv_k).$ Due to the differentiability of $f,$ we have
\bbbb
\f(\uu_k,\vv_k) - \f(\tilde{\uu}_k,\tilde{\vv}_k) & = & \f(\uu_k,\vv_k) - \f(\tilde{\uu}_k,\vv_k) + \f(\tilde{\uu}_k,\vv_k) - \f(\tilde{\uu}_k,\tilde{\vv}_k)\nnn\\
& = & \f_{\uu}(\xi_k,\vv_k)\z_k^{(\uu)} + \f_{\vv}(\tilde{\uu}_k,\eta_k)\z^{(\vv)}_k,
\eeee
for some $\xi_k = s_1\uu_k + (1-s_1)\tilde{\uu}_k,\ s_1\in[0,1]$ and $\eta_k = s_2\vv_k + (1-s_2)\tilde{\vv}_k,\ s_2\in[0,1].$ Thus, we have
\bbbb
\z_{k+1}^{(\uu)} = \Psi_1\z_0^{(\uu)} + \Psi_2\tilde{\uu}_0 + \sum_{j=0}^k \l[\frac{\tau_j}{2}\sum_{i=j}^{j+1}\l(\f_{\uu}(\xi_i,\vv_i)\z_i^{(\uu)} + \f_{\vv}(\tilde{\uu}_i,\eta_i)\z^{(\vv)}_i\r)\r] + \sum_{j=0}^k \OO\l(\tau_j^3\r).
\eeee
Since $f$ is bounded, we obtain a similar stability bound as above. The same can be done for \R{ufactoredNNOADI} by using the properties of the reaction term $g.$
\end{proof}



We now present the stability theorem for the scheme \R{sc1}-\R{sc2}.

\begin{thm}
Assume that \R{cfl} holds, $u,v\in H^1(\Omega),$ and let $\tau_j,\ j=0,\ldots,k,$ be sufficiently small. Then it follows that the schemes \R{sc1}-\R{sc2} are stable in the sense that there exists $K_1,K_2>0$ such that
$$\l\|\emph{\z}_{k+1}^{(\emph{\uu})}\r\| \le K_1\quad\mbox{and}\quad \l\|\emph{\z}_{k+1}^{(\emph{\vv})}\r\|\le K_2,$$
where $\emph{\z}_0^{(\emph{\uu})} = \emph{\uu}_0 - \tilde{\emph{\uu}}_0$ and $\emph{\z}_0^{(\emph{\vv})} = \emph{\vv}_0 - \tilde{\emph{\vv}}_0$ are initial errors, $\emph{\z}_{k+1}^{(\emph{\uu})}\mathrel{\mathop:}= \emph{\uu}_{k+1}-\tilde{\emph{\uu}}_{k+1}$ and $\emph{\z}_{k+1}^{(\emph{\vv})}\mathrel{\mathop:}= \emph{\vv}_{k+1}-\tilde{\emph{\vv}}_{k+1}$ are the $(k+1)$th perturbed error vectors, and $K_1$ and $K_2$ are independent of $k$ and $\tau_k.$
\end{thm}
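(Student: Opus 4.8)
The plan is to transcribe the proof of Theorem 4.2, making the bookkeeping adjustments forced by the two extra pairs of self-diffusion factors. First I would use Theorem 4.1 (nonsingularity of the left-hand factors under \R{cfl}) to invert the six factors on the left of \R{sc1} and write, with the reaction term momentarily set aside,
$$\uu_{k+1} = M(\tau_k,\uu_k,\uu_{k+1},\vv_k,\vv_{k+1})\,\uu_k.$$
Invoking Theorems 3.1 and 3.2 — which identify \R{sc1} with the Crank--Nicolson step \R{crankSELFCROSS1} up to $\OO(\tau_k^3)$ — together with the standard fact that a Crank--Nicolson step reproduces the matrix exponential to second order, I would pass to the Magnus-type representation used in the proof of Theorem 4.2,
$$M(\tau_k,\uu_k,\uu_{k+1},\vv_k,\vv_{k+1}) = E\l(\tau_k L L_u^{(k+1/2)}\r) + \OO\l(\tau_k^3\r), \qquad L_u^{(k+1/2)} := d_1 + s_1 D_{k+1/2}^{(\uu)} + c_{12}D_{k+1/2}^{(\vv)},$$
and the analogous identity for $\vv_{k+1}$ with $L_v^{(k+1/2)} := d_2 + s_2 D_{k+1/2}^{(\vv)} + c_{21}D_{k+1/2}^{(\uu)}$; these are exactly the matrices appearing in Lemmas 4.2--4.4. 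The extra self-diffusion factors enlarge the splitting product, but no new phenomenon arises: the collapse to the midpoint exponential with an $\OO(\tau_k^3)$ remainder is precisely what Theorems 3.1--3.2 provide.

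Next I would perturb the numerical data, set $\z_k^{(\uu)} := \uu_k - \tilde{\uu}_k$ and $\z_k^{(\vv)} := \vv_k - \tilde{\vv}_k$, iterate the one-step map and subtract to obtain $\z_{k+1}^{(\uu)} = \Psi_1\z_0^{(\uu)} + \Psi_2\tilde{\uu}_0 + \sum_{j=0}^k\OO(\tau_j^3)$, where $\Psi_1 := \prod_{j=0}^k E(\tau_j L L_u^{(j+1/2)})$ and $\Psi_2 := \prod_{j=0}^k F_j - \prod_{j=0}^k G_j$ with $F_j := E(\tau_j L L_u^{(j+1/2)})$, $G_j := E(\tau_j L \tilde{L}_u^{(j+1/2)})$, and $\tilde{L}_u^{(j+1/2)} := d_1 + s_1 D_{j+1/2}^{(\tilde{\uu})} + c_{12}D_{j+1/2}^{(\tilde{\vv})}$. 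Since \R{cfl} forces $D^{(\uu)},D^{(\vv)}$ (and their perturbations) to have nonnegative entries, Lemma 4.2 — which by the remark following it is insensitive to which nonnegative diagonal enters — gives $\|\Psi_1\| \le 1$ and $\|F_j\|,\|G_j\| \le 1$. Writing $\Psi_2$ as the telescoping sum $\sum_j[\prod_{i>j}F_i](F_j-G_j)[\prod_{i<j}G_i]$, applying Lemma 4.3, and using that $L_u^{(j+1/2)} - \tilde{L}_u^{(j+1/2)} = s_1 D_{j+1/2}^{(\z^{(\uu)})} + c_{12}D_{j+1/2}^{(\z^{(\vv)})}$ (the $d_1$ terms cancel), I obtain
$$\|\Psi_2\| \le \sum_{j=0}^k C\tau_j\l(s_1\l\|L D_{j+1/2}^{(\z^{(\uu)})}\r\| + c_{12}\l\|L D_{j+1/2}^{(\z^{(\vv)})}\r\|\r).$$
The one genuinely new feature relative to Theorem 4.2 is the self-diffusion term $\|L D^{(\z^{(\uu)})}\|$ sitting alongside the cross-diffusion term $\|L D^{(\z^{(\vv)})}\|$; both are bounded by constants through the argument of Lemma 4.4 applied to the pairs $(\uu_k,\tilde{\uu}_k)$ and $(\vv_k,\tilde{\vv}_k)$, which is where $u,v\in H^1(\Omega)$ is used. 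Hence $\|\Psi_2\| \le C\sum_{j=0}^k\tau_j \le CT =: C_1$, and, bounding $\|\sum_j\OO(\tau_j^3)\| \le C(\max_j\tau_j^2)\sum_j\tau_j =: C_3 T$, I get $\|\z_{k+1}^{(\uu)}\| \le \|\z_0^{(\uu)}\| + C_1\|\tilde{\uu}_0\| + C_3 T =: K_1$; the identical argument applied to \R{sc2}, $L_v$ and $g$ gives $\|\z_{k+1}^{(\vv)}\| \le K_2$.

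Finally I would reinstate the reaction terms exactly as in the closing paragraph of the proof of Theorem 4.2: differentiability of $f$ gives $\f_k - \tilde{\f}_k = \f_{\uu}(\xi_k,\vv_k)\z_k^{(\uu)} + \f_{\vv}(\tilde{\uu}_k,\eta_k)\z_k^{(\vv)}$ for intermediate points $\xi_k,\eta_k$, so the error recursion acquires the extra contribution $\sum_{j=0}^k\frac{\tau_j}{2}\sum_{i=j}^{j+1}(\f_{\uu}(\xi_i,\vv_i)\z_i^{(\uu)} + \f_{\vv}(\tilde{\uu}_i,\eta_i)\z_i^{(\vv)})$, and symmetrically for $\z_{k+1}^{(\vv)}$ via $g$. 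Using local boundedness of $\nabla f,\nabla g$ along the solution, these terms are dominated by $C\sum_j\tau_j(\|\z_j^{(\uu)}\| + \|\z_j^{(\vv)}\|)$, so a discrete Gronwall inequality applied to $\|\z_j^{(\uu)}\| + \|\z_j^{(\vv)}\|$ absorbs them while keeping $K_1,K_2$ independent of $k$ and of the individual $\tau_j$. I expect the only mildly delicate point to be the first step — verifying that the enlarged splitting product still collapses to $E(\tau_k L L_u^{(k+1/2)})$ with an $\OO(\tau_k^3)$ error and that the resulting $L_u,L_v$ are exactly those covered by Lemmas 4.2--4.4 — since everything downstream is then a line-by-line repetition of the Theorem 4.2 argument, now carrying the additional $s_i D^{(\z)}$ terms through Lemmas 4.3 and 4.4.
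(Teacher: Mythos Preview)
Your proposal is correct and follows essentially the same route as the paper: both arguments reduce Theorem 4.3 to a repetition of the Theorem 4.2 proof with $L_u = d_1 + s_1 D^{(\uu)} + c_{12}D^{(\vv)}$ in place of $d_1 + c_{12}D^{(\vv)}$, relying on Lemmas 4.2--4.4 at exactly the same junctures. The only cosmetic difference is that where you split the difference $L_u - \tilde{L}_u = s_1 D^{(\z^{(\uu)})} + c_{12}D^{(\z^{(\vv)})}$ and bound each piece separately via Lemma 4.4, the paper instead invokes the coarse inequality $s_1 D_k^{(\uu)} + c_{12}D_k^{(\vv)} \le \max\{s_1,c_{12}\}D_k^{(\w)}$ with $w_i = \max\{(\uu_k)_i,(\vv_k)_i\}$ to collapse back to a single diagonal and reuse the lemmas verbatim; both devices accomplish the same thing.
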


\begin{proof}
This result follows by a proof similar to that of the previous theorem, where we use our developed bounds on $u$ and $v$ and employ the bound $s_1 D_k^{(\uu)} + c_{12} D_k^{(\vv)} \leq \max\{ s_1, c_{12}\} D_k^{(\textbf{w})},$ where $w_i = \max\{ \uu_i, \vv_i\}$ for $X_k^{(\uu)}$. The remaining arguments follow in a similar fashion.
\end{proof}

\section{Numerical Experiments}

In this section we provide illustrative the efficiency and convergence rate of the developed algorithm. 
All of the computations are carried out on a single HP EliteDesk 800 G1 work station with an Intel\circledR Core(TM) i7-4770 3.40 GHz processor with 16 GB of RAM using Matlab\circledR. 

\subsection{Example 1.}

Consider Eqs.~(\ref{eq:1.1})-(\ref{eq:1.2}) with $L=1,$ homogenous Dirichlet boundary conditions, and reaction functions
{\small
\begin{eqnarray*}
 f(u,v) = g(u,v) = -4\pi^2e^{-4\pi^2 t}\left( \cos^2(\pi y) \sin^2(\pi x) + \cos^2(\pi x) \sin^2(\pi y)-2\sin^2(\pi y) \sin^2(\pi x)\right) 
\end{eqnarray*}}
It can be shown by direct calculation that the exact solution to this system of partial differential equations is
$$ u(x,y,t) = v(x,y,t) = \sin(\pi x)\sin(\pi y) \exp(-2\pi^2 t). $$
Define $u_{i,j}^{(\tau)}$ and $v_{i,j}^{(\tau)}$ as the numerical solutions to $u$ and $v$ at time $T$ with a temporal step size of $\tau$ and location $(i\delta, j\delta)$. Based on the analysis is the previous section, we anticipate that the order of convergence in time is second-order.  That is, $\max\{ |u_{i,j}^{(\tau)}-u(ih,ij,T)|, |v_{i,j}^{(\tau)}-v(ih,ij,T)|\} \approx C \tau^p$ for some arbitrary constant $C$ and $p=2$.  We approximate $p$ by,
$$ p \approx \frac{1}{\ln(2)} \frac{1}{N^2} \max\left\{ \sum_{i,j=1}^N \ln \frac{ \left|u_{i,j}^{(\tau)}-u(ih,ij,T)\right|}{\left|u_{i,j}^{(\tau/2)}-u(ih,ij,T)\right|}, \sum_{i,j=1}^N \ln \frac{ \left|v_{i,j}^{(\tau)}-v(ih,ij,T)\right|}{\left|v_{i,j}^{(\tau/2)}-v(ih,ij,T)\right|}\right\} $$
Let $\delta=.01$ and $d_1=d_2=s_1=s_2=c_{12}=c_{21}=1$. Therefore, based on our CFL condition, we let $\tau = .25\times 10^{-4}<.5\times 10^{-4}.$  We simulate the solution until $T=1$ and estimate the approximate order of convergence to be $p \approx 2.000809.$  This indicates that our nonlinear operator splitting method is converging at the desired second-order rate.

Likewise, if we consider Neumann boundary conditions with the reaction functions
\begin{eqnarray*}
&& f(u,v)= g(u,v) = \exp(-2\pi^2t)\pi^2(9\exp(\pi^2t)\cos(\pi x)\cos(\pi y)+8\cos(\pi x)^2\cos(\pi y)^2\\
 &&~~~~~~~~~~~~~~~~~~~~~~~~~~~-4\cos(\pi y)^2\sin(\pi x)^2-4\cos(\pi x)^2\sin(\pi y)^2),
\end{eqnarray*}
then the exact solution for $u$ and $v$ is $a+\cos(\pi x)\cos(\pi y)\exp(-\pi^2t)$, where $a\in\mathbb{R}$.  In the simulations we let $a=1$, which ensures that the initial condition remains nonnegative throughout the computational domain. Using the same resolution as in the Dirichlet case, we determine the order of convergence to be approximately $1.9945$. In Table \ref{Table1}, we demonstrate the data for the maximum absolute error as we increase the resolution in time.  Notice that for a fixed $\delta,$ as we half the temporal step the max error decreases by approximately one-fourth. This is also true if we fix the temporal step size and then halve the spatial step sizes, which is the expected result for a second-order numerical method.

\begin{table}
\begin{center}
\begin{tabular}{|c|c|c|c|}\hline
$\tau$ & $\delta$ & Max Error (Dirichlet) & Max Error (Neumman) \\ \hline
   $2.5000\times 10^{-5}$ &  $1.0000\times 10^{-2}$ &  $1.8804\times 10^{-8}$ & $4.4342\times 10^{-8}$ \\ \hline
   $1.2500\times 10^{-5}$ &  $1.0000\times 10^{-2}$ &  $4.6915\times 10^{-9}$ & $1.0839\times 10^{-9}$\\ \hline
   $6.2500\times 10^{-6}$ &  $1.0000\times 10^{-2}$ &  $1.1694\times 10^{-9}$ & $2.4808\times 10^{-9}$\\ \hline
\end{tabular}
\end{center}
\caption{A absolute maximum error at $t=1$ as we increase the resolution of the temporal step. Each time that the temporal step is halved we see that the max error is reduced by a fourth, which indicates a second-order convergence of the operator splitting scheme.}
\label{Table1}
\end{table}

To test the computational efficiency we determine the computational time to complete $1000$ iterations for a fixed $\tau$ as we increase the spatial resolution, $N$. We determined that the computational time scales as $\OO(N^{1.7568}),$ which is an exponent less than the number of unknowns. Figure 1(b) shows the log-log plot of the computational time versus $N$. These results suggests that the procedure is computationally efficient.

\begin{figure}[h]
\begin{center}
\includegraphics[scale=.4]{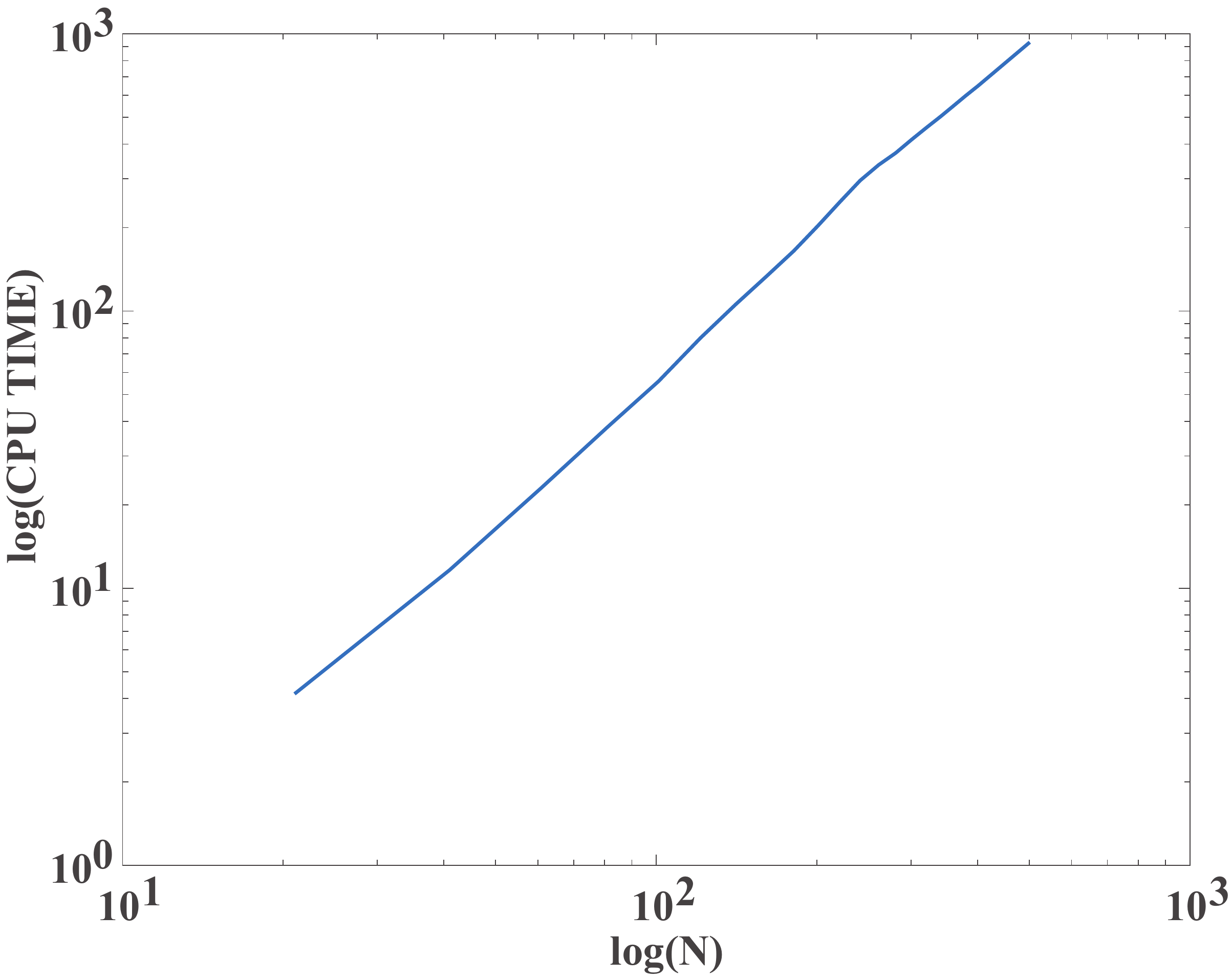}
\caption{A log-log plot of the computational time, in seconds, versus $N$ after $1000$ iterations.  The temporal step is held constant, $\tau = 10^{-6}$, while $\delta=1/(N-1)$.  A linear least squares approximates the slope of the line to be $1.75681$. This indicates that the computational time is proportional to $N^{1.654628}$.  Since this is slower than $N^2$, then the proposed nonlinear splitting scheme is highly efficient. }
\label{Example1CompTime}
\end{center}
\end{figure}

\subsection{Example 2.}

Consider Eqs.~(\ref{eq:1.1})-(\ref{eq:1.2}) with Neumann boundary conditions and the following Lotka-Volterra reaction functions,
\begin{eqnarray*}
 f(u,v) &=& u(a_1 - b_1 u + c_1 v) \\
 g(u,v) &=& v(a_2 + b_2 u - c_2 v)
\end{eqnarray*}
In \cite{Le2006} it was shown that for the above Lotka-Volterra reaction function, a global solution exists if the parameters for diffusion satisfy the following conditions:

\[ s_1s_2 \geq 0, ~~ s_2 > c_{12}, ~~ s_1 > c_{21} \]

Consider a situation that violates the third condition and let $d_1=.01, d_2=.1, s_1=.05, s_2=.4, c_{12}=.12,$ and $c_{21}=.06.$ These coefficients satisfy the criteria shown in \cite{Kouachi2014} for global solutions. For the reaction functions' parameters we choose $a_1=1,b_1=2,c_1=.2,a_2=.3,b_2=1,$ and $c_2=4.$  The initial conditions are chosen to be
\begin{eqnarray*}
u(x,y,0) &=& 2+\sum_{i=1}^3 \sigma_i \cos(n_i x)\cos(m_i y) \\
v(x,y,0) &=& 2+\sum_{i=1}^3 \beta_i \cos(a_i x)\cos(b_i y),
\end{eqnarray*}
where $a_i,b_i,m_i,$ and $n_i$ are positive integers and $L=\pi.$  The amplitudes $\sigma_i$ and $\beta_i$ were chosen randomly from uniform distribution on the interval $(0,1)$. Regardless of the choice of amplitudes or frequencies the solution converges to spatially homogenous solutions which exists globally in time.

\subsection{Example 3.}

Consider Eqs.~(\ref{eq:1.1})-(\ref{eq:1.2}) with Dirichlet boundary conditions and reaction functions of the form
\begin{eqnarray*}
 f(u,v) &=& u(a_1 + b_1 u) \\
 g(u,v) &=& v(a_2 + b_2 v).
\end{eqnarray*}
In \cite{Zhu2014} it was shown that finite time blow up can occur if $b_i>s_i\lambda$ and $a_i\geq d_i \lambda$ where $\lambda$ is the first eigenvalue of the Laplacian operator and $c_{12}=c_{21}=0$.

Consider the initial conditions
\begin{eqnarray*}
u(x,y,0) = v(x,y,0) = \sin^2(4x)\sin^2(2y)
\end{eqnarray*}
with parameters $d_1=d_2=1, s_1=s_2=.05, a_1=a_2=3, b_1=b_2 = 4$. In such a situation the system of equations are decoupled. The parameters for $u$ and $v$ are chosen to obey the criteria to guarantee blow-up. In order to satisfy Eq.~(\ref{cfl}) the temporal step-size must be reduced as the populations grow in time.  We terminate the calculation when $\tau$ is adapted to a minimum of $10^{-10}$. Indeed, in our simulations the populations are demonstrated to grow without bound. Figure \ref{SelfBlowUpPanel} shows a panel of the initial condition coupled with two snapshots of the population during the calculation.  Figure \ref{SelfBlowUpPanel}(b) shows the population at $t=.5$ for which the population has begin to concentrate and grow in the center of the domain.  Figure \ref{SelfBlowUpPanel}(c) shows the population in the iteration prior to termination.  At termination of the computation, the population had reached a maximum value of approximately $1.6370\times 10^7$, while the temporal derivative was approximately $1.0116\times 10^{18}$.

\begin{figure}[h]
\begin{center}
\includegraphics[scale=.35]{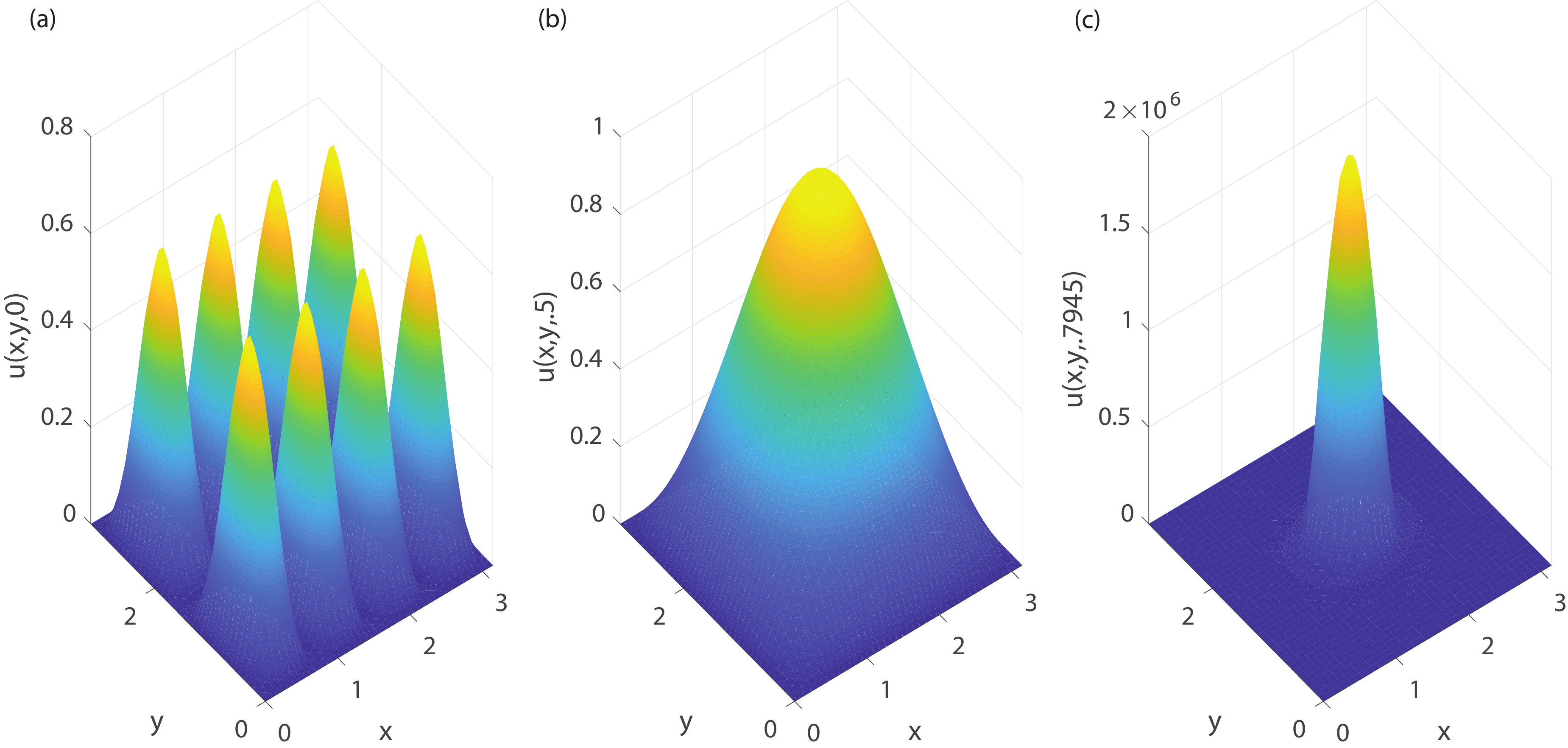}
\caption{A panel showing the (a) initial condition, (b) population at $t=.5$ and (c) just prior to finite time blow-up at $t=.7945$, from left to right, respectively.}
\label{SelfBlowUpPanel}
\end{center}
\end{figure}

\section{Conclusions}

In this paper a novel adaptive nonlinear operator splitting method was developed to approximate coupled parabolic partial differential equations that contain self- and cross-diffusion terms. The method extends the algorithm designed in \cite{BeauPadgett2017} and provides an updated criterion to ensure the method's stability and the invertibility of the involved matrices. Surprisingly, the inclusion of the cross-diffusion term does not complicate the numerical analysis any further than the inclusion of self-diffusion terms. Hence, convergence and stability follow by employing techniques similar to those developed in our prior work \cite{BeauPadgett2017}. However, the current research improves upon these techniques and employs techniques that allow for reduced regularity requirements on the solution. Moreover, the analysis does not depend on the boundary conditions utilized and can be readily extended to several problems of particular interest in the mathematical biology community such as in \cite{Kouachi2014} and more recently \cite{Parshad2016}. Our numerical experiments provide empirical evidence of the anticipated convergence rate for Dirichlet and Neumann boundary conditions, further verifying the efficacy of the proposed method.  The computational experiments further provide evidence suggesting that global solutions exist under certain parameter restrictions.

\section*{Acknowledgements}

The first author would like to explicitly express his gratitude for an internal research grant (No. 150030-26423-150) from Stephen F. Austin State University.

~

\noindent We would like to express our gratitude for the numerous suggestions provided by the reviewers that undoubtedly improved the results and readability of this manuscript.

%

\end{document}